\let\oldbibliography\thebibliography
\renewcommand{\thebibliography}[1]{%
	\oldbibliography{#1}%
	\setlength{\itemsep}{-1.2mm}%
}
\theoremstyle{plain}
\newtheorem{thm}{Theorem}[section]
\newtheorem{lem}[thm]{Lemma}
\newtheorem{prop}[thm]{Proposition}
\theoremstyle{definition}
\newtheorem{defn}[thm]{Definition}
\newtheorem{ex}[thm]{Example}
\newtheorem{theorem}{Theorem}
\newtheoremstyle{myremark}% name
{3pt}%      Space above
{3pt}%      Space below
{\small \rmfamily}%         Body font
{5pt}%         Indent amount (empty = no indent, \parindent = para indent)
{\rmfamily}% Thm head font
{:}%        Punctuation after thm head
{.5em}%     Space after thm head: " " = normal interword space;
\theoremstyle{myremark}
\def\R{\mathbb{R}}
\def\N{\mathbb{N}}
\def\Z{\mathbb{Z}}
\def\cB{\mathcal{B}}
\def\cM{\mathcal{M}}
\def\cN{\mathcal{N}}
\def\cT{\mathcal{T}}
\def\txtb{{\textnormal{b}}}
\def\txtd{{\textnormal{d}}}
\def\txte{{\textnormal{e}}}
\def\I{\infty}
\newcommand{\be}{\begin{equation}}
\newcommand{\ee}{\end{equation}}
\newcommand{\benn}{\begin{equation*}}
\newcommand{\eenn}{\end{equation*}}
\newcommand{\bea}{\begin{eqnarray}}
\newcommand{\eea}{\end{eqnarray}}
\newcommand{\beann}{\begin{eqnarray*}}
	\newcommand{\eeann}{\end{eqnarray*}}
\newcommand{\myendex}{$\blacklozenge$\end{ex}}
\newcommand{\myendexerc}{$\lozenge$\end{exerc}}
\newcommand{\myendpexerc}{$\lozenge$\end{pexerc}}
\newcommand{\abs}{{\sf abs}}
\newcommand{\Lip}{{\sf Lip}}
\newcommand{\BL}{{\sf BL}}
\newcommand{\TV}{{\sf TV}}
\newcommand{\rd}{\mathrm{d}}
\begin{document}
\numberwithin{equation}{section}
\author{Marios Antonios Gkogkas\thanks{
Department of Mathematics, Technical University
of Munich, 85748 Garching b.~M\"unchen, Germany}~,~Christian Kuehn\footnotemark[1]~~and~Chuang Xu\footnotemark[1]}

\title{Continuum Limits for Adaptive Network Dynamics}

\maketitle

\begin{abstract}
Adaptive (or co-evolutionary) network dynamics, i.e., when changes of the network/graph topology are coupled with changes in the node/vertex dynamics, can give rise to rich and complex dynamical behavior. Even though adaptivity can improve the modelling of collective phenomena, it often complicates the analysis of the corresponding mathematical models significantly. For non-adaptive systems, a possible way to tackle this problem is by passing to so-called continuum or mean-field limits, which describe the system in the limit of infinitely many nodes. Although fully adaptive network dynamic models have been used a lot in recent years in applications, we are still lacking a detailed mathematical theory for large-scale adaptive network limits. For example, continuum limits for static or temporal networks are already established in the literature for certain models, yet the continuum limit of fully adaptive networks has been open so far. In this paper we introduce and rigorously justify continuum limits for sequences of adaptive Kuramoto-type network models. The resulting integro-differential equations allow us to incorporate a large class of co-evolving graphs with high density. Furthermore, we use a very general measure-theoretical framework in our proof for representing the (infinite) graph limits, thereby also providing a structural basis to tackle even larger classes of graph limits. As an application of our theory, we consider the continuum limit of an adaptive Kuramoto model directly motivated from neuroscience and studied by Berner et al.~in recent years using numerical techniques and formal stability analysis.
\end{abstract}

\textbf{Keywords:} adaptive networks, co-evolutionary networks, continuum limit, graph limits, Kuramoto-type models.

\section{Introduction}

Adaptive (or co-evolutionary) networks are models of diverse phenomena, ranging from neuroscience, epidemiology, ecology to game theory, just to name a few~\cite{GB2008,GS2009}. The main feature of adaptive networks is the coupling of dynamics \emph{of} and \emph{on} the network. From the perspective of recent applications~\cite{Berner2019,BVSY21,GMT2016,HLLJ2018,HN2016}, it is understood that the feedback mechanism between the network topology and the node dynamics can lead to a wide variety of interesting effects. For example, bifurcation-induced transitions may change their direction from super-critical (soft) transitions to sub-critical (hard) transitions~\cite{GrossDLimaBlasius,KuehnBick}, which are referred to as tipping points or critical transitions. Also, new dynamical states such as multi-clusters can be induced by taking into adaptivity~\cite{Berner2019}. Although a number of formal asymptotic methods or moment-closure schemes exist to study adaptive networks mathematically~\cite{Burger21,KissMillerSimon,KuehnMC}, a rigorous mathematical development of the field has just been started. One natural way to study networks rigorously is to exploit the large-scale limit of an infinite number of nodes/vertices to derive continuum or mean-field differential equations. This approach is not yet available for adaptive networks. In this paper, we derive continuum limit differential equations for adaptive/co-evolutionary dynamics given by \emph{Kuramoto-type models}. The original Kuramoto model was posed on a static graph and for all-to-all coupling originally by Kuramoto~\cite{Kuramoto84,Kuramoto75}, who first used it to model synchronization of systems of coupled oscillators. We refer to~\cite{Strogatz2000} for a concise introduction to the original Kuramoto model and to~\cite{Acebronetal,Arenasetal,PikovskyRosenblumKurths} for a more detailed view on the paradigmatic status of Kuramoto models as well as for further references. A quite general class of adaptive Kuramoto-type models on $N$ oscillators/nodes can be written in the following form
\begin{subequations}
\label{coevolution}
\begin{alignat}{3}
\frac{\txtd \phi_i^N}{\txtd t}=\dot{\phi}^N_i=&~\omega_i^N(\phi_i^N,t)+\frac{1}{N}\sum_{j=1}^NW^N_{ij}(t)D(\phi_i^N,\phi_j^N),\quad t>t_0, \label{coevolution-a}\\
\frac{\txtd W_{ij}^N}{\txtd t}=\dot{W}_{ij}^N=&~-\varepsilon \left (W_{ij}^N+H(\phi_i^N,\phi_j^N) \right),\quad t>t_0, \label{coevolution-b} \\
\phi_i^N(t_0) =& ~\phi_i^{N,0}, \quad W^N_{ij}(t_0) = W^{N}_{ij}, \quad i,j \in \{1,\ldots,N\} =:[N],
\end{alignat} 	
\end{subequations}
where $t\in[t_0,t_0+T]=:\mathcal{T}_{t_0,T}$ is the time interval with $0<T$, $\phi_i^N=\phi_i^N(t) \in \mathbb{T}=\mathbb{R}/(2\pi\mathbb{Z})$ represents the phase of the $i$-th oscillator for $i \in [N]$, $\omega_i^N: \mathbb{T}\times\mathbb{R} \to \mathbb{R} $ is the vector field describing the intrinsic frequency, $D:\mathbb{T}^2 \to \mathbb{R}$ is a coupling function, $W^N(t) = (W^N_{i,j}(t))_{i,j =1,...N } \in \mathbb{R}^{N\times N}$ is the time-evolving adjacency matrix of the network of oscillators, which takes into account the local information of two interacting oscillators via the function $H: \mathbb{T}^2 \to \mathbb{R}$, and $\varepsilon>0$ is a parameter; the structure of the model~\eqref{coevolution} is directly motivated by recent applications~\cite{Berner2019,BVSY21}. The parameter $\varepsilon$ controls the time scale between the dynamics \emph{on} the network of the phases and the dynamics \emph{of} the network weights. In applications, one frequently takes $\varepsilon>0$ small so that the network topology evolves slowly in comparison to the node dynamics; we do not require this time scale separation for our results but we believe it to be insightful to track $\varepsilon$ throughout the calculations. Let us note that the weights $W^N_{i,j}(t)$ are not assumed to be symmetric, i.e., we do not assume that $W^N_{i,j}(t) = W^N_{j,i}(t)$ for all $i,j \in [N]$, nor do we assume positivity of the weights. This entails that our graphs can be directed, weighted, and signed.

We want to investigate the limit of~\eqref{coevolution} as the number of oscillators tends to infinity ($N\to\infty$). If a suitable limiting differential equations equation exists, it is often analytically and/or numerically easier to handle than systems with finite large $N$. This approach can make it feasible to analytically understand phase transitions and synchronization~\cite{Med2013,OWMS12}, chimera and clustered states~\cite{Kuramoto02}, or multistability~\cite{Wiley06}. There are different approaches to obtain a limiting equation. If the oscillator phases can be arranged on a limiting space and the pointwise limit exists as $N\to \infty$, then this is usually referred to as the \emph{continuum limit}. The other common approach is to aim for a \emph{mean-field limit}, which characterizes the weak limit of empirical distributions of the phases of oscillators. For the classical all-to-all coupling topology, many results on limiting equations exist, particularly for the mean-field limit~\cite{Lancellotti,Neunzert}. Thanks to new techniques for graph limits~\cite{Scededgy2018,L12}, there have been several works on mean-field or continuum limits also for systems without all-to-all coupling~\cite{GK20,Kuehn2020,KuehnThrom2,KuehnXu,Med2013,Med2013b,Med14}. In this work, we are interested in continuum limits for adaptive Kuramoto-type models. For the static network case, results with random weights were developed in~\cite{Med2013b}, while the analysis of the steady states on small-world graphs was carried out via continuum limits in~\cite{Med14}. In addition to the static restriction, another relevant assumption in~\cite{Med2013b,Med14} was the existence of a limiting graphon, which places a density requirement on the coupling structure. Furthermore, if there is no feedback between network topology and node dynamics but just a given time-dependent family of networks, continuum limits were considered in~\cite{APD2020}. A view via moment hierarchies and formal moment closure can be found in~\cite{Burger21}. In summary, the myriad application results for finite $N$ for adaptive Kuramoto-type models and the existing results for static/temporal network continuum limits for dense graphs, naturally pose the question, whether one can prove continuum limits for fully adaptive Kuramoto-type models on various classes of graph limits? In this paper, we answer this question positively and establish rigorous results leading to a continuum limit non-local integro-differential equation. We prove the well-posedness of the limit equation and pointwise approximation by solutions via a sequence of finite $N$ models of the form~\eqref{coevolution}. 

%%%%%%%%%%%%%%%%%%%%%%%%%%%%%%%%%%%%%%%%%%%%%%%%%%%%%%%%%%%%%%%%%%%%%%%%%%%%%%%%%%%%%%%%%%%%%%%%%%%%%%%%
\subsection{The continuum limit}
\label{sec: intro cont lim}

To start, we need a limiting structure for the graph and fix $(X, \mathfrak{B}(X), \mu_X)$ as a Borel probability space. Here $X$ can be regarded as the vertex space of the graph limit with time-dependent edge weights. When one first tries to introduce a continuum limit for~\eqref{coevolution}, one realizes that it is unclear, how to even translate the evolution for the weights~\eqref{coevolution-b} into the language of graph limits. Ideally, we would like to be able to model a broad class of limiting graph topologies. One possible way is to use the mathematical space of graphops, which is a very elegant and recent mathematical framework for representing finite or infinite undirected graphs of arbitrary density and their graph limits~\cite{Scededgy2018}. Recall from~\cite{Scededgy2018} that an undirected graph with positive weights can be represented by a bounded, self-adjoint and positivity-preserving operator $A: L^\infty(X;\mu_X) \to L^1(X; \mu_X)$, where $A$ is called a \emph{graphop}. A graphop can be viewed as a generalized concept for the adjacency matrix as for finite-dimensional graphs it coincides with the adjacency matrix. Graphops are in direct correspondence to a family of finite measures $(\eta^x)_{x\in X}$, called \emph{fiber measures}, via the Riesz representation theorem
\begin{equation*}
(Af)(x) = \int_{X} f(y) ~\txtd\eta^x(y) \text{ \quad $\mu_X$-a.e.~$x \in X$,\quad  for $f \in L^\infty(X;\mu_X)$.}
\end{equation*}
Intuitively, for a node $x \in X$, the fiber measure $\eta^x$ defines the neighbors of $x$. Now, having in mind this representation of graphs via operators and corresponding fiber measures, we go a step further and view a general weighted graph on $X$ just as a family of finite measures $(\eta^x)_{x\in X}$. Then we embed the evolution for the weights \eqref{coevolution-b} in an evolution equation for time-dependent measures $(\eta^x_t)_{x\in X}$; note that in the literature on graph limits, one would use a sub-index $x$ to denote fiber measures of graphops but it seems more natural for our context to use a super-index for $x$ to have the sub-index more directly available for the time dependence.

Having motivated the measure-theoretical framework for representing graphs, we can now introduce the continuum limit of the adaptive Kuramoto-type model \eqref{coevolution}. Consider the initial value problem (IVP) of following family of integro-differential equations
\begin{subequations}
\label{characteristic-Eq-1}
\begin{alignat}{2}
\frac{\partial\phi(t,x)}{\partial t}=&~\omega(x, \phi(t,x),t)+\int_X D(\phi(t,x),\phi(t,y))~\rd \eta^x_{t}(y),\quad t>t_0, \label{characteristic-Eq-1a}\\
\frac{\partial \eta^x_t(y)}{\partial t}=&~-\varepsilon \eta^x_{t}(y)
-\varepsilon H(\phi(t,x),\phi(t,y)) \mu_X(y),\quad t>t_0, \label{characteristic-Eq-1b}\\
\phi(t_0,x)=&~\phi_0(x),\quad \eta^x_{t_0}=\eta^x_0,
\end{alignat}	
\end{subequations}
where $\omega: X \times \mathbb{T} \times \mathbb{R} \to \mathbb{R}$ is the frequency function and we consider initial conditions $(\phi_0,\eta_0)\in C(X,\mathbb{T}\times\cM(X))$ with $\cM(X)$ denoting the space of finite signed Borel measures. Here, \eqref{characteristic-Eq-1} is to be interpreted in the integral form:
\begin{subequations}
\label{characteristic-Eq-int-1}
\begin{alignat}{2}
\phi(t,x)=&~\phi_0(x)+\int_{t_0}^t\left[\omega(x, \phi(\tau,x), \tau)+\int_X D(\phi(\tau,x),\phi(\tau,y))~\rd \eta_{\tau}^x(y)
\right]~\rd\tau,\quad t>t_0,\\
\label{characteristic-Eq-int-2}
\eta^x_t(y)=&~\eta_0^x-\varepsilon\int_{t_0}^t \eta^x_{\tau}(y)~\rd\tau
-\varepsilon\int_{t_0}^t H\left(\phi(\tau,x),\phi(\tau,y)\right)\mu_X(y)~\rd\tau,\quad t>t_0,\\
\phi(t_0,x)=&~\phi_0(x),\quad \eta_{t_0}^x=\eta_0^x.
\end{alignat}
\end{subequations}
Moreover, note that \eqref{characteristic-Eq-1b} (\eqref{characteristic-Eq-int-2}, respectively) is an evolution equation for the time-evolving family of measures $(\eta^x_t)_{x\in X}$, which should be understood in the weak sense, meaning that for any bounded continuous test-function $f\in C_{\textnormal{b}}(X)$ we have
\begin{eqnarray}
\int_X f(y)~\rd\eta^x_t(y)&=&\int_X f(y)~\rd \eta_0^x(y) - \varepsilon\int_{t_0}^t \left(\int_X f(y)~\rd \eta_{\tau}^x(y)\right)~\rd\tau \nonumber \\
&&-\varepsilon\int_{t_0}^t\left(\int_Xf(y)H(\phi(\tau,y)-\phi(\tau,x))~\rd\mu_X(y)\right)~\rd\tau. \label{eq: weak form of eq. for meas}
\end{eqnarray}

\begin{defn}\textbf{(Solution for the IVP of the continuum limit)} \label{def: sol of the IVP} A pair $(\phi,\eta)\in C(\mathcal{T}_{t_0,T}\times X,\mathbb{T}\times\cM(X))$ is called a \emph{solution} to the IVP  \eqref{characteristic-Eq-1} if it satisfies \eqref{characteristic-Eq-int-1} for all $x\in X$ and 
$t \in \mathcal{T}_{t_0,T}$,
%$t\in[t_0,t_0+T]$,
i.e., for some $T>0$. In particular, $(\phi,\eta)$ is called a \emph{global solution} to the IVP  \eqref{characteristic-Eq-1} if
it exists for $t \in [t_0,\infty)$. 
%$T=+\I$.
%\textcolor{green}{ MG: Here, I changed the previous sentence a bit and I chose not to write $T=+\infty$ for the global solution, because we have defined $T_{t_0,T}= [t_0, t_0+T]$, thus we would  get $[t_0, \infty]$.
%}
\end{defn}

\subsection{Main results}\label{sec: main results}

In the following, let $X$ be a compact subset of finite-dimensional Euclidean space equipped with the metric $d_X(x,y)$. We use $d_{\mathbb{T}^k}(\phi,\varphi)$ to denote the geodesic distance between $\phi,\varphi$ on the unit circle/torus $\mathbb{T}^k:=\R^k/\Z^k$ for $k=1,2$. We use $|\cdot|$ to denote the usual Euclidean 2-norm. Hence, we trivially have $d_{\mathbb{T}^k}(\phi,\varphi)\le|\phi-\varphi|$. Let $\cM_{+}(X)$ be the space of finite positive measures and $\cM_{\abs}(X)$ be the space of finite measure continuous with respect to the reference measure $\mu_X$. Furthermore, let $\cM_{+,\abs}(X)=\cM_{\abs}(X)\cap \cM_{+}(X)$. For any $a\in\R$ and $T>0$, let $\mathcal{T}_{a,T}=[a,a+T]$. The first main theorem in this paper shows that the IVP for the continuum limit \eqref{characteristic-Eq-1} under the assumptions below is well-posed and yields graphs with positive edge-weights. In other words,  (cf. Section \ref{sec: prelimin}). Let $\cM_{\abs}(X)$ be the set of all positive measures absolutely continuous with respect to $\mu_X$. Assume
\begin{itemize}
\item[(A1)] $D\colon \mathbb{T}^2\to\R$ is Lipschitz continuous\footnote{Equivalently, $D$ can be extended to be a period-1 (coordinate-wise) Lipschitz continuous function on $\R^2$.}: For $\phi,\varphi\in\mathbb{T}^2$,
\begin{equation*}
|D(\phi) - D(\varphi)| \le \Lip(D) d_{\mathbb{T}^2}(\phi,\varphi),
\end{equation*}
where $\Lip(D)$ is the Lipschitz constant of function $D$.
\item[(A2)] $H\colon \mathbb{T}^2\to\R$ is Lipschitz continuous: For $\phi,\varphi\in\mathbb{T}^2$,
\begin{equation*}
|H(\phi) - H(\varphi)| \le \Lip(H)d_{\mathbb{T}^2}(\phi,\varphi).
\end{equation*}
\item[(A3)] $\omega:X \times \mathbb{T}\times \mathbb{R} \to \mathbb{R} $ is continuous in $t$ and Lipschitz continuous in $\phi$ and $x$: For all $x,x'\in X$, $t \in \mathbb{R}$, and $\phi,\varphi \in \mathbb{T}$,
\begin{equation*}
|\omega(x,\phi,t) - \omega(x',\varphi,t)| \le \Lip(\omega)\Big( d_X(x,x') + d_{\mathbb{T}^2}(\phi,\varphi) \Big).
\end{equation*}
\item[(A4)] $(\phi_0,\eta_0)\in C(X,\mathbb{T}\times\cM(X))$.
\item[(A5)] $\eta_0\in C(X,\cM_{+,\abs}(X))$ and $W(x,y):=\frac{\rd \eta^x_0 }{\rd \mu_X}(y)$ satisfies 
\begin{equation}
\label{condition-positivity}
\inf_{x,y\in X}W(x,y)\ge\|H\|_{\infty}(\txte^{\varepsilon T}-1),\end{equation}	
where $\|H\|_{\infty}=\sup_{(\phi,\varphi)\in\mathbb{T}^2}|H(\phi,\varphi)|$ is the supremum norm of the continuous function $H$.
\end{itemize}

We make a few remarks about the above assumptions. (A1)-(A3) are the regularity conditions of the model. The continuity in the initial condition (A4) is crucial in establishing not only the well-posedness but also the approximation result (see Theorem A and Theorem B below). (A5) is a condition for the positivity of the limit graph $\eta_t$ for $t\in\cT_{t_0,T}$. We are now ready to state our first main result.

\begin{theorem}\textbf{(Existence and uniqueness of solutions)} \label{thm A}\\
Assume (A1)-(A4), then there exists a unique global solution $(\phi, \eta) \in C([t_0,+\I)\times X,\mathbb{T}\times\cM(X))$ of the continuum limit equation \eqref{characteristic-Eq-1}. Furthermore, if (A5) holds, then we have $(\phi(t), \eta_t)\in C(X,\mathbb{T}\times\cM_{+}(X))$ for all $t\in\mathcal{T}_{t_0,T}$.
\end{theorem}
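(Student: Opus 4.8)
The plan is to set up a fixed-point argument in a suitable complete metric space and then run a continuation / a~priori-bound argument to upgrade the local solution to a global one, finally verifying that the positivity condition (A5) is preserved along the flow.

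\medskip

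\textbf{Step 1: Reformulate as a fixed-point problem.}
First I would fix $T>0$ and work on the Banach space $C(\cT_{t_0,T}\times X,\R)\times C(\cT_{t_0,T}\times X,\cM(X))$, where the measure component carries the total-variation norm $\|\cdot\|_{\TV}$ and $\cM(X)$ is a Banach space under it. I would define the solution map $\cF=(\cF_1,\cF_2)$ by the right-hand sides of the integral equations \eqref{characteristic-Eq-int-1}: $\cF_1(\phi,\eta)(t,x)$ is the right side of the first equation and $\cF_2(\phi,\eta)(t,x)$ is the right side of \eqref{characteristic-Eq-int-2}, the latter understood in the weak sense \eqref{eq: weak form of eq. for meas}. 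One must check that $\cF$ maps the space into itself: this uses (A1)--(A4) to get continuity in $(t,x)$ of the images (continuity in $x$ of $\phi$ and $\eta$ propagates from (A4) via the Lipschitz bounds and dominated convergence; continuity in $t$ is immediate from the integral form), and uses that $\eta^x_t$ stays a finite signed measure because \eqref{characteristic-Eq-int-2} only adds a multiple of $\mu_X$ and rescales.

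\medskip

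\textbf{Step 2: Contraction on a short time interval.}
On $\cT_{t_0,\delta}$ for small $\delta$, I would estimate $\|\cF(\phi,\eta)-\cF(\tilde\phi,\tilde\eta)\|$. For the $\phi$-component, the difference of the $D$-integrals splits into a term controlled by $\Lip(D)\,|\phi-\tilde\phi|$ (using $d_{\mathbb{T}^2}\le|\cdot|$) times $\|\eta^x_\tau\|_{\TV}$, plus a term $\|D\|_\infty\,\|\eta^x_\tau-\tilde\eta^x_\tau\|_{\TV}$; the $\omega$-term is handled by (A3). For the $\eta$-component, \eqref{eq: weak form of eq. for meas} gives a difference bounded by $\eps\int_{t_0}^t\|\eta^x_\tau-\tilde\eta^x_\tau\|_{\TV}\,\rd\tau$ plus $\eps\,\Lip(H)\,\mu_X(X)\int_{t_0}^t|\phi(\tau,\cdot)-\tilde\phi(\tau,\cdot)|\,\rd\tau$. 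Here the only subtlety is that the $\|\eta^x_\tau\|_{\TV}$ appearing as a multiplier must be bounded a~priori; on a short interval a crude Grönwall bound on $\|\cF_2(\phi,\eta)\|$ starting from $\|\eta_0\|_\infty$ suffices, so I would first restrict $\cF$ to a closed ball of radius $R$ around the constant map $(\phi_0,\eta_0)$, show $\cF$ maps this ball into itself for $\delta$ small, and then show it is a contraction there. Banach's fixed point theorem then gives a unique local solution; uniqueness on overlaps is standard.

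\medskip

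\textbf{Step 3: Global existence via a~priori bounds.}
The key observation is that \eqref{characteristic-Eq-int-2} is \emph{linear} in $\eta$ with a bounded inhomogeneity: taking $f=\mathbf 1$ (or passing to total variation) and using Grönwall gives $\|\eta^x_t\|_{\TV}\le \txte^{\eps(t-t_0)}\big(\|\eta_0^x\|_{\TV}+\|H\|_\infty\mu_X(X)(t-t_0)\big)$, i.e.\ no finite-time blow-up of the measure component; and $\phi$ lives on the compact torus $\mathbb{T}$, so it cannot blow up either. Hence the local solution extends to all of $[t_0,\infty)$ by the standard continuation argument (if the maximal interval were $[t_0,t^*)$ with $t^*<\infty$, the uniform bounds let one restart at a time close to $t^*$ and cross it, a contradiction). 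This establishes the unique global solution $(\phi,\eta)\in C([t_0,\infty)\times X,\mathbb{T}\times\cM(X))$ under (A1)--(A4).

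\medskip

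\textbf{Step 4: Propagation of positivity under (A5).}
Finally, assume (A5) and fix $t\in\cT_{t_0,T}$. From \eqref{characteristic-Eq-int-2} one sees that each $\eta^x_t$ stays absolutely continuous with respect to $\mu_X$, with density $W_t(x,y):=\frac{\rd\eta^x_t}{\rd\mu_X}(y)$ satisfying the scalar integral equation $W_t(x,y)=W(x,y)-\eps\int_{t_0}^t W_\tau(x,y)\,\rd\tau-\eps\int_{t_0}^t H(\phi(\tau,x),\phi(\tau,y))\,\rd\tau$, i.e.\ $\partial_t W_t = -\eps W_t - \eps H(\phi(t,x),\phi(t,y))$ pointwise in $(x,y)$. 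Solving this linear ODE explicitly, $W_t(x,y)=\txte^{-\eps(t-t_0)}W(x,y)-\eps\int_{t_0}^t \txte^{-\eps(t-\tau)}H(\phi(\tau,x),\phi(\tau,y))\,\rd\tau$, so $W_t(x,y)\ge \txte^{-\eps(t-t_0)}\big(W(x,y)-\|H\|_\infty(\txte^{\eps(t-t_0)}-1)\big)\ge \txte^{-\eps(t-t_0)}\big(\inf W - \|H\|_\infty(\txte^{\eps T}-1)\big)\ge 0$ by \eqref{condition-positivity}. Continuity of $(x,y)\mapsto W_t(x,y)$ (hence $t\mapsto\eta_t\in\cM_{+}(X)$ continuous into $X$) follows from (A4), continuity of $\phi$, and (A2). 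This gives $(\phi(t),\eta_t)\in C(X,\mathbb{T}\times\cM_{+}(X))$ for all $t\in\cT_{t_0,T}$.

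\medskip

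\textbf{Main obstacle.} I expect the genuinely delicate point to be Step 2 together with the continuity-in-$x$ claims of Step 1: one has to choose the right norm on the measure component (total variation, not a weak norm) so that the $D$-integral term is actually Lipschitz, while simultaneously ensuring the images are \emph{continuous} in $x$ — which requires that $x\mapsto\eta^x_0$ being continuous in $\cM(X)$ with the TV norm is compatible with the test-function characterization, and a careful dominated-convergence argument for the $\int_X D\,\rd\eta^x_\tau$ term. The global-existence and positivity steps are, by contrast, essentially explicit because the $\eta$-equation is linear.
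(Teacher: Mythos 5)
Your proposal is correct and follows essentially the same route as the paper: a Banach fixed-point argument on a closed ball in $C(\cT\times X,\mathbb{T}\times\cM(X))$ with the total-variation norm on the measure component, global extension via the a priori bound coming from the linearity of the $\eta$-equation and compactness of $\mathbb{T}$, and positivity via the explicit variation-of-constants formula combined with \eqref{condition-positivity}. The only cosmetic difference is that in Step 4 you work with the density $W_t(x,y)$ directly, whereas the paper tests the measure against indicator functions of Borel sets; the two computations are identical in substance.
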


The proof of this result is given in Section \ref{sec: ex and un}. Our second main result justifies~\eqref{characteristic-Eq-1} as the continuum limit of \eqref{coevolution}, more precisely, \eqref{characteristic-Eq-1} can be approximated by a sequence of discrete models in the form of~\eqref{characteristic-Eq-1} as $N\to \infty$. To state this result, additional to the assumptions (A1)- (A4), we need to impose additional regularity conditions:

\begin{itemize}
\item[(A6)] $W(x,y)$ defined in (A5) is continuous in both $x$ and $y$.
\item[(A7)] For every $N\in\N$, there exists a uniform partition $(X^N_i)_{i \in [N]}$ of $X$ with respect to $\mu_X$ such that 
$$\mu_X(X^N_i) = \frac{1}{N},\quad i \in [N],\qquad X=\bigcup_{i=1}^N X^N_i$$ and $$\lim_{N\to \infty} \sup_{i \in [N]} diam(X_i^N) = 0.$$
\end{itemize}

To state the discrete approximation result, it is helpful to introduce suitable discretized functions as well as matrices. Let $(X^N_i)_{i \in [N]}$ be a partition satisfying (A7), for $N\in\N$. For every $N \in \mathbb{N}$, $i \in [N]$, let $\omega_i^N$ the frequencies be given by 	
\begin{equation}
\label{eq: omegas}
\omega^N_i(\varphi,t) := N \int_{X_i^N} \omega(y,\varphi,t)~\rd \mu_X(y),\quad \varphi\in \mathbb{T},\quad t\in\R.
\end{equation}
Let the (initial) weights be given by the formula
\begin{equation}
\label{eq: weigths 1}
\quad W^{N}_{i,j} := N^2\int_{X^N_i\times X^N_j} W(x,y) ~\txtd \mu_X(x) ~\txtd \mu_X(y), \quad  i,j \in [N],
\end{equation}
or via the formula
\begin{equation}
\label{eq: weights 2}
W^{N}_{i,j} := W(x_0,y_0), \quad  i,j \in [N],
\end{equation} 
for any  $(x_0,y_0) \in  X^N_i \times X^N_j$. The initial conditions $\phi^{N,0} \in \mathbb{T}^N$ are given by
\begin{equation}
\label{eq: init cond}
\phi^{N,0}_i:= N \int_{X^N_i} \phi_0(y)~\rd y.
\end{equation}	
Let $\{\phi^N_j(t)\}_{1\le j\le N}$ be the solution to \eqref{coevolution} with \eqref{eq: omegas}, \eqref{eq: weigths 1} and \eqref{eq: init cond}.

Finally, we use some appropriate metric $d_{\cT_{t_0,T},\infty}$ (see Equation \eqref{def: metric hatd} in Section \ref{sec: prelimin} for a precise definition) to compare solutions of \eqref{characteristic-Eq-1} with those of \eqref{coevolution}. 
To compare with the solution to \eqref{characteristic-Eq-1},
we define the ``lifted'' tuple $(\phi^N, \eta^N) \in C(\cT_{t_0,T}, \mathcal{B}(X,\mathbb{T})) \times  C(\cT_{t_0,T}, \mathcal{B}(X,\cM(X)))$\footnote{Here $\mathcal{B}(X,\cM(X))$ is the space of bounded measurable functions from $X$ to $\cM(X)$.} from the solution of the discretized model \eqref{characteristic-Eq-1} as follows:
\begin{equation}
\label{eq: weights in intro 1}
\phi^N(t,x):= \sum_{j=1}^N \phi_j^N(t) \chi_{X_j^N}(x),
\end{equation}
where $\xi_B$ is the indicator function of a set $B$ and $\eta^{N} \in C(\cT_{t_0,T}, \mathcal{B}(X,\cM(X)))$ is defined by the
step-function $W^N: \cT_{t_0,T} \times X^2 \to \mathbb{R}$ via
\begin{equation}
\label{eq: step graphon intro}
W^N(t;x,y) := \sum_{i,j = 1}^N W^N_{i,j}(t) \chi_{X_i^N \times X_j^N}(x,y), \quad (x,y) \in X^2;\quad \rd(\eta^{N})_t^x(y):= W^N(t;x,y)\rd \mu_X(y).
\end{equation}	

\begin{theorem} \textbf{(Discrete approximation)} \label{thm: discr approx}\\
Assume (A1)-(A2), and (A4)-(A7). Let $\phi(t,x)$ be the solution to the IVP \eqref{characteristic-Eq-1}. Let 
\benn
\Big( (\phi_j^N)_{j=1}^N,(W_{i,j}^N)_{i,j = 1}^N \Big) \in C(\cT_{t_0,T},~\mathbb{T}^N\times \mathbb{R}^{N\times N}) 
\eenn
be the solution to \eqref{coevolution} with \eqref{eq: omegas}, \eqref{eq: weigths 1} or \eqref{eq: weights 2}, and \eqref{eq: init cond}. Let $(\phi^N, \eta^N) \in C(\cT_{t_0,T}, L^\infty(X,\mathbb{T})) \times  C(\cT_{t_0,T},  \mathcal{B}(X,\cM(X)))$ be defined in \eqref{eq: weights in intro 1} and \eqref{eq: step graphon intro}. Then 
\begin{equation}
\label{eq: disc apr}
\lim_{N \to \infty} d_{\cT_{t_0,T},\infty}\left((\phi,\eta),(\phi^N,\eta^N) \right) = 0,
\end{equation}	
where $d_{\cT_{t_0,T},\infty}$ is defined in~\eqref{def: metric hatd}.
\end{theorem}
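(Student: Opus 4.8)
The plan is to compare, on the time interval $\cT_{t_0,T}$, the lifted discrete solution $(\phi^N,\eta^N)$ with the continuum solution $(\phi,\eta)$ from Theorem \ref{thm A}, by deriving a Gr\"onwall-type inequality for the error. First I would set up the key quantity $e_N(t) := \sup_{x\in X}d_{\mathbb{T}}(\phi(t,x),\phi^N(t,x)) + \sup_{x\in X}\|\eta^x_t - (\eta^N)^x_t\|$ (in whatever norm on $\cM(X)$ the metric $d_{\cT_{t_0,T},\infty}$ uses — presumably a bounded-Lipschitz or total-variation type norm on the $\mu_X$-absolutely continuous parts, controlled via the densities $W(\cdot,\cdot)$ and $W^N(\cdot;\cdot,\cdot)$). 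The essential observation is that the lifted step-functions $(\phi^N,\eta^N)$ \emph{almost} solve the continuum integral equations \eqref{characteristic-Eq-int-1}: plugging $\phi^N(t,x)=\sum_j\phi^N_j(t)\chi_{X^N_j}(x)$ into the right-hand side of \eqref{characteristic-Eq-1a}, the integral $\int_X D(\phi^N(t,x),\phi^N(t,y))\,\rd(\eta^N)^x_t(y)$ reduces, on each cell $X^N_i$, to $\sum_j W^N_{ij}(t)\int_{X^N_j}D(\cdot,\phi^N_j(t))\,\rd\mu_X(y)$, which is exactly $\frac1N\sum_j W^N_{ij}(t)D(\phi^N_i(t),\phi^N_j(t))$ — matching \eqref{coevolution-a} — plus a \emph{consistency error} coming solely from the fact that the true frequency $\omega(x,\cdot,t)$ and the true initial data $\phi_0(x)$, $W(x,y)$ are replaced by their cell-averages \eqref{eq: omegas}, \eqref{eq: weigths 1}/\eqref{eq: weights 2}, \eqref{eq: init cond}. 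The same bookkeeping applies to the weight equation \eqref{characteristic-Eq-1b}.

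The core estimate then proceeds in three steps. Step 1 (consistency/discretization error): using (A3) and (A6)–(A7), bound the difference between $\omega(x,\varphi,t)$ and $\omega^N_i(\varphi,t)$ on $X^N_i$ by $\Lip(\omega)\sup_i\mathrm{diam}(X^N_i)$, and similarly bound $\sup_x\|\eta^x_0-(\eta^N)^x_0\|$ and $\sup_x d_{\mathbb{T}}(\phi_0(x),\phi^N(t_0,x))$ by a modulus of continuity of $\phi_0$, $W$ times $\sup_i\mathrm{diam}(X^N_i)$; by (A7) all of these tend to $0$ as $N\to\infty$. Call the resulting bound $\delta_N\to0$. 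Step 2 (Lipschitz/stability estimate): using (A1)–(A2) (Lipschitz continuity of $D$ and $H$), the uniform boundedness of the weights (which holds because the $W^N_{ij}(t)$ solve the linear ODE \eqref{coevolution-b} and hence stay bounded by $\|W^N\|_\infty + \|H\|_\infty(e^{\varepsilon T}-1)$ uniformly in $N$, by the explicit variation-of-constants formula), and the uniform bound on $\|\eta^x_t\|$ from the analogous continuum estimate, show that replacing $(\phi,\eta)$ by $(\phi^N,\eta^N)$ in the two integrands changes the right-hand sides by at most $C\,e_N(\tau)$ for a constant $C=C(\Lip(D),\Lip(H),\|D\|_\infty,\|H\|_\infty,\sup_t\|\eta\|,\varepsilon,T)$ independent of $N$. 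Step 3 (Gr\"onwall): combining, $e_N(t)\le \delta_N + C\int_{t_0}^t e_N(\tau)\,\rd\tau$, whence $e_N(t)\le \delta_N e^{C(t-t_0)}\le \delta_N e^{CT}\to 0$ uniformly on $\cT_{t_0,T}$, which is precisely \eqref{eq: disc apr}.

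I expect the main obstacle to be the measure-component estimate: controlling $\sup_x\|\eta^x_t-(\eta^N)^x_t\|$ in the metric underlying $d_{\cT_{t_0,T},\infty}$. Two sub-issues arise. First, one must verify that the chosen norm on $\cM(X)$ is compatible both with testing against the Lipschitz function $D(\phi(t,x),\cdot)$ appearing in \eqref{characteristic-Eq-1a} (so that a small measure-error feeds back as a small phase-error) and with the driving term $H(\phi(\tau,x),\phi(\tau,y))\mu_X$ in \eqref{characteristic-Eq-1b} — a bounded-Lipschitz norm handles the former cleanly, while for the latter, since both $\eta^x_t$ and $(\eta^N)^x_t$ are absolutely continuous with densities $W(\cdot)$ resp.\ the step-function $W^N(t;\cdot)$, one may alternatively work directly with $\sup_x\|W(t,x,\cdot)-W^N(t;x,\cdot)\|_{L^1(\mu_X)}$ or even $L^\infty$, which is what the hypothesis (A6) on continuity of $W$ in both variables is designed to make tractable. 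Second, the lifted $\phi^N(t,\cdot)$ is only a step function, so when testing the continuum integrand $\int_X D(\phi^N(t,x),\phi^N(t,y))\,\rd\eta^x_t(y)$ one gets a term comparing $\eta^x_t$ with $(\eta^N)^x_t$ \emph{and} a term comparing, within a single cell, $\phi^N(t,y)$ for $y\in X^N_j$ against the value the continuum equation "wants" — but by construction these cell-values are the \emph{exact} discrete solution, so this contributes to $e_N$, not to $\delta_N$, and the circularity is broken precisely because the discrete dynamics \eqref{coevolution} is solved exactly. Careful choice of norms and keeping the dependence of $C$ on $N$-uniform bounds (especially the a priori bound on $\|\eta^x_t\|$ and on the weights $W^N_{ij}(t)$) is the only delicate point; everything else is the standard Picard/Gr\"onwall machinery already used in the proof of Theorem \ref{thm A}.
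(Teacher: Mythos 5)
Your proposal is correct and follows essentially the same route as the paper: you recognize that the lifted step-function solution exactly solves the continuum equation with cell-averaged data (the paper's Proposition \ref{prop: ODE as integro-differ eq}), reduce the problem to a continuous-dependence/Gr\"onwall estimate in the data $(\phi_0,\eta_0,\omega)$ (the paper's Proposition \ref{prop: reg} and Lemma \ref{lem: cont depend 2}), and kill the consistency error via uniform continuity of $W$, $\phi_0$, $\omega$ together with (A7) (the paper's Lemma \ref{lem: disc approx of the intial cond}). Your handling of the measure component via $L^1$ control of the densities is exactly how the paper bounds $\|\eta^N_{t_0}-\eta_0\|^*_{\TV}$, so no substantive difference remains.
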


\subsection{Outline of the paper}
In Section \ref{sec: prelimin} we will introduce notation and provide some background material from measure theory. After that, Section \ref{sec: ex and un} is devoted to the proof of Theorem \ref{thm A}. In Section \ref{sec: reg} we prove continuous dependence, and hence well-posedness of the limiting integro-differential equation. In Section~\ref{sec: disc apr} we develop the proof of Theorem \ref{thm: discr approx}. To do this, we need to show that the co-evolutionary Kuramoto-type models of the form \eqref{coevolution} can be viewed as a special continuum limit \eqref{characteristic-Eq-1} (Proposition \ref{prop: ODE as integro-differ eq}). In Section \ref{sec: main application}, we apply our main results to a special Kuramoto model studied in~\cite{Berner2019}. We conclude with a brief discussion of our results and an outlook in Section~\ref{sec: lim of the framework}.

\section{Preliminaries}
\label{sec: prelimin}
As we have described in the introduction (cf.~Section \ref{sec: intro cont lim}), we view a weighted, time-evolving graph on the compact vertex set $X$ as a family of time-dependent measures $ (\eta_t^x )_{x \in X}$, which can be viewed as the disintegration of a family of measures $\eta_t$ on the edge set $X\times X$. For a very detailed explanation and justification, why we can interpret these measures as generalized graphs, we refer the reader to \cite{Scededgy2018,KuehnXu}.

For a set $A\subseteq X$, let $diam A$ be the diameter of $A$. Here, $\cM(X)$ denotes the space of all finite signed measures on $X$ and $\cM_+(X)$ the space of all finite positive measures. Let $\mathcal{B}_1(X)$ be the space of measurable bounded functions $f: X \to \mathbb{R}$ such that $\sup_{x\in X}|f(x)|\le1$ and let $\mathcal{BL}_{1}(X)$ be the space of Lipschitz continuous functions $f: X \to \mathbb{R}$ with Lipschitz constant $\Lip(f) \le 1$. Recall that \cite[Chapter 8]{B07}
\begin{itemize}
\item the space  $\cM(X)$ equipped with the total variation norm 
\[\|\mu\|^*_{\sf TV}\colon=\sup_{f\in\mathcal{B}_1(X)}\int_X f~\rd\mu\]
is a Banach space.
\item the space $\cM_+(X)$ equipped with the bounded Lipschitz norm
\[\|\mu\|^*_{\BL}\colon=\sup_{f\in\mathcal{BL}_1(X)}\int_X f~\rd\mu\] is a Banach space.
\end{itemize}
Next, we provide basic definitions and properties for spaces of measure-valued functions from \cite{KuehnXu}.

\begin{defn}
Let $(\eta_t)_{t\in\R}\subseteq\cM(X)$. If 
$$\lim_{\varepsilon\to 0}\frac{\eta_{t+\varepsilon}-\eta_{t}}{\varepsilon}\in\cM(X)$$ 
exists, then
\[\frac{\rd\eta_t}{\rd t}=\lim_{\varepsilon\to 0}\frac{\eta_{t+\varepsilon}-\eta_{t}}{\varepsilon}\] 
is called the \emph{derivative of $\eta_t$ at $t$}.
\end{defn}

\begin{ex}
\label{exmp: measure derivative}
Let $F\in C^1(X)$ and $\mu\in\cM(X)$. Define 
$$\eta_t=F(t)\mu\in\cM(X),\quad \text{for all}\quad t\in\R.$$ 
Then we claim that $\frac{\rd\eta_t}{\rd t}=F'(t)\mu$. Indeed, for any $f\in C_\textnormal{b}(X)$, we have $f\in L^1(X;\mu)$ since $X$ is compact, which implies by Dominated Convergence Theorem that
\begin{align*}
&\lim_{\varepsilon\to0}\int_{X}f~\rd\frac{F(t+\varepsilon)-F(t)}{\varepsilon}\mu = \lim_{\varepsilon\to0}\int_{X}f\frac{F(t+\varepsilon)-F(t)}{\varepsilon}~\rd\mu\\
=&\int_{X}f\lim_{\varepsilon\to0}\frac{F(t+\varepsilon)-F(t)}{\varepsilon}~\rd\mu 
= \int_{X}fF'(t)~\rd\mu=F'(t)\int_Xf~\rd\mu,
\end{align*}
i.e., we find $\frac{\rd\eta_t}{\rd t}=F'(t)\mu$.
\end{ex}

Not all families of parameterized measures are differentiable.

\begin{ex}
Let $\eta_t=\delta_t$. Then for all $f\in C_\txtb(\R)$ we get
\benn
\lim_{\varepsilon\to0}\int_{\R}f~\rd\frac{\delta_{t+\varepsilon}-\delta_t}{\varepsilon}=\lim_{\varepsilon\to0}\frac{f(t+\varepsilon)-f(t)}{\varepsilon},
\eenn
which may not exist, e.g., consider the function $f(x)=(1+\sqrt{x})^{-1/2}$. 
\end{ex}

\begin{prop}
\label{prop-derivative}
Let $\mathcal{N}$ be a compact interval of $\R$. Assume $\eta\in C(\mathcal{N},\cM(X))$. Let $t_0\in\mathcal{N}$. Then $\xi_t=\int_{t_0}^t\eta_{\tau}~\rd\tau\in\cM(X)$ understood in the weak sense
\[\int_Xf~\rd\xi_t=\int_{t_0}^t\left(\int_Xf~\rd\eta_{\tau}\right)~\rd\tau,\quad \forall f\in C_\txtb(X),\]
is differentiable at $t$ for all $t\in\mathcal{N}$, where the derivative is understood as one-sided for the two endpoints of $\cN$.
\end{prop}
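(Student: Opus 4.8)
The plan is to split the claim into (i) checking that the weak prescription really defines an element $\xi_t\in\cM(X)$ for each $t\in\cN$, and (ii) differentiating $t\mapsto\xi_t$ using only the continuity of $\eta$. For (i), fix $t\in\cN$ and observe that for any $f\in C_\txtb(X)=C(X)$ the scalar map $\tau\mapsto\int_X f\,\rd\eta_\tau$ is continuous on $\cN$, since $\big|\int_X f\,\rd\eta_\tau-\int_X f\,\rd\eta_s\big|\le\|f\|_\infty\,\|\eta_\tau-\eta_s\|^*_{\TV}$ and $\eta\in C(\cN,\cM(X))$; in particular it is integrable on $[t_0,t]$, and $M:=\sup_{\tau\in\cN}\|\eta_\tau\|^*_{\TV}<\infty$ by compactness of $\cN$. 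Hence $L_t(f):=\int_{t_0}^t\big(\int_X f\,\rd\eta_\tau\big)\,\rd\tau$ defines a bounded linear functional on $C(X)$ with $|L_t(f)|\le M\,|t-t_0|\,\|f\|_\infty$, and the Riesz representation theorem yields a unique $\xi_t\in\cM(X)$ with $\int_X f\,\rd\xi_t=L_t(f)$ for all $f$ and $\|\xi_t\|^*_{\TV}\le M\,|t-t_0|$. (Equivalently, $\xi_t$ is the Banach-space valued Riemann/Bochner integral of the continuous map $\tau\mapsto\eta_\tau$; because finite signed Borel measures on the compact metric space $X$ are separated by $C(X)$, this agrees with the weakly-defined $\xi_t$.)

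For (ii), additivity of the scalar integral in its upper limit gives, for every $f\in C(X)$,
\[
\int_X f\,\rd(\xi_{t+h}-\xi_t)=\int_t^{t+h}\Big(\int_X f\,\rd\eta_\tau\Big)\,\rd\tau ,
\]
and subtracting $h\int_X f\,\rd\eta_t=\int_t^{t+h}\big(\int_X f\,\rd\eta_t\big)\,\rd\tau$ and dividing by $h$ yields, whenever $\|f\|_\infty\le1$,
\[
\Big|\int_X f\,\rd\Big(\tfrac{\xi_{t+h}-\xi_t}{h}-\eta_t\Big)\Big|
=\Big|\tfrac1h\int_t^{t+h}\Big(\int_X f\,\rd(\eta_\tau-\eta_t)\Big)\,\rd\tau\Big|
\le\sup_{\tau\in\cN,\ |\tau-t|\le|h|}\|\eta_\tau-\eta_t\|^*_{\TV}.
\]
Taking the supremum over all such $f$ and using the definition of $\|\cdot\|^*_{\TV}$ gives $\big\|\tfrac{\xi_{t+h}-\xi_t}{h}-\eta_t\big\|^*_{\TV}\le\sup_{\tau\in\cN,\ |\tau-t|\le|h|}\|\eta_\tau-\eta_t\|^*_{\TV}$, which tends to $0$ as $h\to0$ by continuity of $\eta$ at $t$. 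Therefore $\frac{\rd\xi_t}{\rd t}=\eta_t$ in $\cM(X)$; at the two endpoints of $\cN$ one simply restricts $h$ to have a fixed sign and the same estimate gives the required one-sided derivative.

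I expect the only point requiring care to be the bookkeeping in step (i): one must confirm that the weak formula defines a genuine finite signed measure rather than merely a functional on $C_\txtb(X)$, and that moving between this description and the Banach-space valued integral $\int_{t_0}^t\eta_\tau\,\rd\tau$ is legitimate. Both rely on $X$ being compact metric — so that $C_\txtb(X)=C(X)$, the Riesz representation identifies $C(X)^*$ with $(\cM(X),\|\cdot\|^*_{\TV})$, and test functions separate measures. Once this is settled, step (ii) is a direct $\eps$–$\delta$ argument powered by the uniform continuity of $\tau\mapsto\eta_\tau$ on the compact interval $\cN$.
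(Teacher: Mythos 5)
Your proof is correct and follows essentially the same route as the paper's: define $\xi_t$ weakly, write the difference quotient as an average of $\int_X f\,\rd\eta_\tau$ over a shrinking interval, and use the uniform continuity of $\tau\mapsto\eta_\tau$ in $\|\cdot\|^*_{\TV}$ to pass to the limit. You are in fact a bit more careful than the paper in two places — invoking Riesz representation to confirm that the weak prescription yields a genuine element of $\cM(X)$, and taking the supremum over the unit ball of test functions so that the difference quotient converges to $\eta_t$ in the total variation norm rather than merely against each fixed $f$ — both of which strengthen rather than alter the argument.
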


\begin{proof}
Since $\eta\in C(\mathcal{N},\cM(X))$, we have $\|\eta_t\|^*_{\TV}<\infty$ for all $t\in\mathcal{N}$. Hence, $\xi_t\in\cM(X)$ for all $t\in\cN$. Moreover, for $\min\cN<t<\max\cN$, let $0<\varepsilon\le\max\cN-t$, so that we obtain
\begin{align*}
\frac{1}{\varepsilon}\int_Xf~\rd(\xi_{t+\varepsilon}-\xi_t)
=\frac{1}{\varepsilon}\int_{t}^{t+\varepsilon}\left(\int_Xf~\rd\eta_{\tau}\right)\rd\tau,\quad \forall f\in C_\txtb(X),
\end{align*}
which implies that
\begin{align*}
&\left|\frac{1}{\varepsilon}\int_Xf~\rd(\xi_{t+\varepsilon}-\xi_t)-\int_X~\rd\eta_{t}\right| \le\frac{1}{\varepsilon}\int_{t}^{t+\varepsilon}\left|\int_Xf~\rd(\eta_{\tau}-\eta_t)\right|~\rd\tau\\
\le&~\left(\frac{\max_X f-\min_X f}{2}\right)\frac{1}{\varepsilon}\int_{t}^{t+\varepsilon}\|\eta_{\tau}-\eta_t\|_{\TV}^*~\rd\tau
\le\frac{\max_X f-\min_X f}{2}\max_{\tau\in[t,t+\varepsilon]}\|\eta_{\tau}-\eta_t\|_{\TV}^*.
\end{align*}
Analogously, we get
\begin{align*}
\left|\frac{1}{\varepsilon}\int_Xf~\rd(\xi_{t-\varepsilon}-\xi_t)-\int_Xf~\rd\eta_{t}\right|
\le&\frac{\max_X f-\min_X f}{2}\max_{\tau\in[t-\varepsilon,t]}\|\eta_{\tau}-\eta_t\|_{\TV}^*.
\end{align*}
Since $\eta\in C(\mathcal{N},\cM(X))$ is uniformly continuous in $t\in\cN$ and $\frac{\max_X f-\min_X f}{2}<\infty$, we have
\[\lim_{\varepsilon\to0}\frac{1}{\varepsilon}\int_Xf~\rd(\xi_{t-\varepsilon}-\xi_t)=\int_Xf~\rd\eta_{t},\quad \forall f\in C_\txtb(X),\]
i.e., this just means
\begin{equation}\label{diff}
\xi'(t)=\eta_{t}.
\end{equation} 
Similarly, \eqref{diff} holds for $t=\min\cN$ and $t=\max\cN$ upon computing with suitable one-sided limits.
\end{proof}

Recall that $\mathcal{B}(X,\cM(X))$ denotes the space of bounded measurable functions from $X$ to $\cM(X)$. For any $\eta\in \mathcal{B}(X,\cM(X))$, let 
$$\|\eta\|^* :=\sup_{x\in X}\|\eta^x\|^*_{\TV}.$$ 
Let $\eta\in C(\mathcal{T}_{t_0,T}, \mathcal{B}(X,\cM(X)))$, then with a slight abuse of notation, which will be clear from the context, we set
$$\|\eta\|^* :=\sup_{t\in\mathcal{T}_{t_0,T}}\sup_{x\in X}\|\eta^x_t\|^*_{\TV}\ge\sup_{t\in\mathcal{T}_{t_0,T}}\sup_{x\in X}|\eta_t^x|(X).$$
In particular, the upper-star notation will always denote the supremum norm over all free variables. Note that if the cardinality of $X$ is infinite, then the topology induced by the bounded Lipschitz norm is strictly weaker than that induced by the total variation norm, and hence by Banach's Theorem the space $\cM$ equipped with the bounded Lipschitz norm is not complete since the two norms are \emph{not} equivalent \cite{B07}. Furthermore, for any $(\phi,\eta),(\varphi,\xi) \in L^{\infty}(X, \mathbb{T})\times \mathcal{B}(X,\cM(X))$ we define the metric
\begin{equation}
\label{eq: metric tilded}
d_{\infty}((\phi,\eta),(\varphi,\xi)) := d_{\mathbb{T},\infty}(\phi,\varphi)+  \|\eta-\xi\|^*,
\end{equation}
where $d_{\mathbb{T},\infty}(\phi,\varphi):= \textrm{esssup}_{x\in X}d_{\mathbb{T}}(\phi(x), \varphi(x))$. Finally, for any compact interval $\mathcal{N}\subseteq\R$, define a uniform metric on 
$C(\mathcal{N}, L^\infty(X,\mathbb{T})) \times  C(\mathcal{N},  \mathcal{B}(X,\cM(X)))$ by
\begin{equation}
\label{def: metric hatd}
d_{\mathcal{N},\infty}((\phi,\eta),(\varphi,\xi))=\sup_{t\in\mathcal{N}} d_{\infty}((\phi(t),\eta_t),(\varphi(t),\xi_t)).
\end{equation}

\begin{prop}
\label{prop: complete metr spa}
Let $\mathcal{N}\subseteq\R$ be a compact interval. Then 
\begin{itemize}
\item  $C(X,\mathbb{T}\times\cM(X))$ endowed with the metric $d_{\infty}$ defined in \eqref{eq: metric tilded} is a complete metric space.
\item  $C(\mathcal{N}\times X,\mathbb{T}\times\cM(X))$ endowed with the metric $d_{\mathcal{N},\infty}$ defined in \eqref{def: metric hatd} is a complete metric space. 
\end{itemize}
\end{prop}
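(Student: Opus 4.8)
The plan is to reduce both assertions to the classical fact that $C(Y,Z)$ with the uniform metric is complete whenever $Y$ is a compact metric space and $(Z,\rho)$ is a complete metric space. First I would check that the target $\mathbb{T}\times\cM(X)$, with the metric $\rho\big((\phi,\mu),(\varphi,\nu)\big):=d_{\mathbb{T}}(\phi,\varphi)+\|\mu-\nu\|^{*}_{\TV}$, is complete: $\mathbb{T}$ is compact, hence complete; $(\cM(X),\|\cdot\|^{*}_{\TV})$ is a Banach space by \cite[Chapter 8]{B07}; and the $\ell^{1}$-sum of finitely many complete metrics is complete. With $Y=X$ for the first bullet and $Y=\mathcal{N}\times X$ for the second (both compact metric spaces), it remains only to recognize $d_{\infty}$ and $d_{\mathcal{N},\infty}$, restricted to continuous functions, as the uniform metrics associated with $\rho$. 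Here there are two minor points: (i) in \eqref{eq: metric tilded} the phase contribution is $\esssup_{x}d_{\mathbb{T}}(\phi(x),\varphi(x))$, which for continuous $\phi,\varphi$ equals $\sup_{x}d_{\mathbb{T}}(\phi(x),\varphi(x))$ — this uses that $\mu_{X}$ has full topological support, which is also what makes $d_{\infty}$ a genuine metric (rather than a pseudometric) on $C(X,\mathbb{T}\times\cM(X))$; and (ii) a continuous $\eta\colon X\to\cM(X)$ for the $\TV$-topology is automatically bounded, since $x\mapsto\|\eta^{x}\|^{*}_{\TV}$ is then continuous on the compact set $X$, so that indeed $C(X,\mathbb{T}\times\cM(X))\subseteq L^{\infty}(X,\mathbb{T})\times\mathcal{B}(X,\cM(X))$ and the metric applies. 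It is essential that the target carries the total variation topology, not the strictly weaker bounded-Lipschitz one, under which $\cM(X)$ is not complete, as recalled right before \eqref{eq: metric tilded}.

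For the first bullet I would then run the standard argument. Given a $d_{\infty}$-Cauchy sequence $\big((\phi_{n},\eta_{n})\big)_{n}$ in $C(X,\mathbb{T}\times\cM(X))$, for each fixed $x\in X$ we have $\rho\big((\phi_{n}(x),\eta_{n}^{x}),(\phi_{m}(x),\eta_{m}^{x})\big)\le d_{\infty}\big((\phi_{n},\eta_{n}),(\phi_{m},\eta_{m})\big)$, so $\big(\phi_{n}(x),\eta_{n}^{x}\big)_{n}$ is Cauchy in $(\mathbb{T}\times\cM(X),\rho)$ and converges to some $\big(\phi(x),\eta^{x}\big)$, defining a map $(\phi,\eta)\colon X\to\mathbb{T}\times\cM(X)$. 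Letting $m\to\infty$ in the Cauchy estimate, uniformly in $x$, gives $d_{\infty}\big((\phi_{n},\eta_{n}),(\phi,\eta)\big)\to 0$, i.e.\ uniform convergence. Finally, a uniform limit of continuous functions into a metric space is continuous (the usual $\varepsilon/3$ argument with the metric $\rho$), so $(\phi,\eta)\in C(X,\mathbb{T}\times\cM(X))$ and the space is complete.

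For the second bullet the cleanest route is to bootstrap from the first. Given a $d_{\mathcal{N},\infty}$-Cauchy sequence $\big((\phi_{n},\eta_{n})\big)_{n}$, for each fixed $t\in\mathcal{N}$ the slices $\big(\phi_{n}(t,\cdot),(\eta_{n})_{t}\big)_{n}$ are $d_{\infty}$-Cauchy in $C(X,\mathbb{T}\times\cM(X))$, which is complete by the first bullet, hence converge to some $\big(\phi(t,\cdot),\eta_{t}\big)\in C(X,\mathbb{T}\times\cM(X))$; passing to the limit in the uniform-in-$t$ Cauchy estimate yields $d_{\mathcal{N},\infty}$-convergence, and the uniform-limit theorem applied to $t\mapsto\big(\phi(t,\cdot),\eta_{t}\big)$, now a map from $\mathcal{N}$ into the complete space $\big(C(X,\mathbb{T}\times\cM(X)),d_{\infty}\big)$, shows this map is continuous. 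The last step is to upgrade continuity in $t$ to joint continuity of $(\phi,\eta)$ on $\mathcal{N}\times X$; this follows from the triangle inequality $\rho\big((\phi(t_{n},x_{n}),\eta^{x_{n}}_{t_{n}}),(\phi(t,x),\eta^{x}_{t})\big)\le d_{\infty}\big((\phi(t_{n},\cdot),\eta_{t_{n}}),(\phi(t,\cdot),\eta_{t})\big)+\rho\big((\phi(t,x_{n}),\eta^{x_{n}}_{t}),(\phi(t,x),\eta^{x}_{t})\big)$, whose right side tends to $0$ as $(t_{n},x_{n})\to(t,x)$.

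I do not expect a genuine obstacle: the statement is a repackaging of the completeness of a space of continuous functions into a complete target, together with the cited Banach-space property of $(\cM(X),\|\cdot\|^{*}_{\TV})$. The only steps needing (routine) care are the identification of $d_{\infty}$ and $d_{\mathcal{N},\infty}$ on continuous functions with honest uniform metrics into $\mathbb{T}\times\cM(X)$ — the esssup/sup point and the insistence on the total variation topology — and, for the second bullet, the joint-continuity upgrade on $\mathcal{N}\times X$ sketched above.
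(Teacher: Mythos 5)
Your proposal is correct and follows essentially the same route as the paper, which simply invokes the completeness of $(\mathbb{T},d_{\mathbb{T}})$ and $(\cM(X),\|\cdot\|^*_{\TV})$ together with the standard fact that continuous functions from a compact metric space into a complete metric space form a complete space under the uniform metric. You merely spell out the details the paper leaves implicit (the pointwise-Cauchy argument, the $\esssup$/$\sup$ identification for continuous representatives, and the joint-continuity upgrade on $\mathcal{N}\times X$), all of which are routine and correctly handled.
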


\begin{proof} 
Recall that $(\mathbb{T},d_{\mathbb{T}})$ is a complete metric space and $(\cM(X),\|\cdot\|_{\TV})$ is a Banach space \cite{B07}. Since the space of continuous functions from one complete metric space to another complete metric space is also complete, the conclusions follow.
\end{proof}

\section{Existence and uniqueness of solutions}
\label{sec: ex and un}

\begin{proof}[Proof of Theorem \ref{thm A}]
For a small $ 0<t_*$ (to be specified later) we set $\mathcal{T}=\mathcal{T}_{t_0,t_*} = [t_0, t_0+t_*]$. In the following, we prove the conclusions in several steps. We first show the solution exists locally in a subset of $C(\mathcal{T}\times X,\mathbb{T}\times\cM(X))$, and then we show the uniqueness of solutions in $C(\mathcal{T}\times X,\mathbb{T}\times\cM(X))$ using Gronwall's  inequality. Next, we extend the solution to an open maximal existence interval, and using an a-priori estimate we can show the global existence. Finally we prove the positivity of the second component of the solution.

To show the local existence of solutions, we will construct a subspace $\Omega$ of $C(\mathcal{T} \times X,\mathbb{T}\times\cM(X))$ and apply the Banach fixed point theorem to the space $\Omega$ of solutions. Note that, due to the compactness of $\mathbb{T}^2$ and the continuity of $H,D$, we have that the functions $H,D$ are uniformly bounded. In the following we assume $0<t_*<\varepsilon^{-1}$. Let $\sigma\ge\frac{\|H\|_{\infty}+\|\eta_0\|^*}{1-t_*\varepsilon}$, where we recall $\|\eta_0\|^*=\sup_{x\in X}\|\eta_0^x\|^*_{\TV}$, and the convention
$$\|\eta-\xi\|^*=\sup_{t\in \mathcal{T}}\sup_{x\in X}\|\eta_t^x-\xi_t^x\|^*_{\TV},\quad \eta,\xi\in C(\mathcal{T}\times X,\cM(X)).$$
We are ready to define the space 
$$\Omega=\{(\phi,\eta)\in C(\mathcal{T}\times X,\mathbb{T}\times\cM(X))\colon \phi(t_0,x)=\phi_0(x),\ \eta_{t_0}^x=\eta_0^x,\quad \forall x\in X,\quad \|\eta-\eta_0\|^*_{\TV}\le\sigma\},$$ 
where $\|\eta-\eta_0\|^*_{\TV}=\sup_{t\in\mathcal{T}}\sup_{x\in X}\|\eta_t^x-\eta_0^x\|^*_{\TV}$. Henceforth, we also use $\eta_0\in C(\mathcal{T}\times X,\cM(X))$ for the constant function $(\eta_0)^x_t=\eta_0^x$ for $t\in\mathcal{T}$ and $x\in X$. Note that $\Omega$, equipped with the metric $\widehat{d}$ defined in \eqref{def: metric hatd}, is complete since it is a closed subset of the complete metric space $C(\mathcal{T}\times X,\mathbb{T} \times \cM(X))$, cf.~Proposition \ref{prop: complete metr spa}. Next, we define the operator $\mathcal{A}=(\mathcal{A}^{1},\mathcal{A}^{2})=\{\mathcal{A}^{x}\}_{x\in X}=\{(\mathcal{A}^{1,x},\mathcal{A}^{2,x})\}_{x\in X}$ from $\Omega$ to $\Omega$: For every $x\in X$, and $(\phi,\eta)\in\Omega$,
\begin{alignat}{2}
\mathcal{A}^{1,x}(\phi,\eta)(t)=&~\phi_0(x)+\int_{t_0}^t\left[\omega(x, \phi(x,\tau),\tau)+\int_XD(\phi(\tau,x),\phi(\tau,y))
\rd \eta_{\tau}^x(y)
\right]\rd\tau\label{characteristic-1}\\
 (\mathcal{A}^{2,x}(\phi,\eta)(t))(y)=&~\eta_{0}^x	(y)-\varepsilon\int_{t_0}^t\eta_{\tau}^x(y)\rd\tau
-\varepsilon\left(\int_{t_0}^tH(\phi(\tau,x),\phi(\tau,y))\rd\tau\right)\mu_X(y),\quad y\in X \label{characteristic-2}.
\end{alignat}
Here we note that the definition of the operator $\mathcal{A}^{2,x}$ is to be understood as in  \eqref{eq: weak form of eq. for meas}, i.e., in the weak sense. 
In Steps~1. and 2. below, we  show that the $n$-th iteration $\mathcal{A}^n$ for some $n\in\N$ is a contraction mapping from $\Omega$ to $\Omega$.

\begin{enumerate}
\item[Step~1.] $\mathcal{A}$ is a mapping from $\Omega$ to $\Omega$, this means we need to show: For $(\phi, \eta) \in \Omega$, we also have $\mathcal{A}(\phi, \eta) \in \Omega$.

\begin{enumerate}
	\item[Step~1.1.] It is obvious that $\mathcal{A}^{1,x}(\phi,\eta)(t)\in\mathbb{T}$.
	That $\mathcal{A}^{2,x}(\phi,\eta)(t)\in\cM(X)$ for $t\in\mathcal{T}$ follows from
	\[\sup_{\tau\in\mathcal{T}}\sup_{y\in X}\left|H(\phi(\tau,x),\phi(\tau,y))\right|\le\|H\|_{\infty}\]as well as
	\[\sup_{t\in\mathcal{T}}|\eta_t^x|(X)\le\|\eta_0\|^*+\sigma<\infty.\]
	Furthermore, it is trivial to check that $\mathcal{A}(\phi,\eta)^{1,x}(t_0) = \phi_0(x)$ and
	$\mathcal{A}(\phi,\eta)^{2,x}(t_0) = \eta_0^x$
	\item[Step~1.2.] We want to show that $t\mapsto \mathcal{A}^{x}(\phi,\eta)(t)$ is continuous. Let $t,\ t'\in\mathcal{T}$ with $t<t'$.
	\begin{align*}
	&|\mathcal{A}^{1,x}(\phi,\eta)(t)-\mathcal{A}^{1,x}(\phi,\eta)(t')|\\
	=&\left|\int_t^{t'}\left[\omega(x, \phi(x,\tau),\tau)+\int_XD(\phi(\tau,x),\phi(\tau,y))
	\rd\eta_{\tau}^x(y)
	\right]\rd\tau\right|\\
	\le&\int_t^{t'}\left|\omega(x, \phi(x,\tau),\tau)+\int_XD(\phi(\tau,x),\phi(\tau,y))
	\rd\eta_{\tau}^x(y)
	\right|\rd\tau\\
	\le&\left(\| \omega \|_\infty +\|D\|_{\infty}\sup_{t\in\cN}|\eta_t^x|(X)\right)|t-t'|\\
	\le&\left(\| \omega \|_\infty+\|D\|_{\infty}(\|\eta_0\|^*+\sigma)\right)|t-t'|\to0,\quad \text{as}\ |t-t'|\to0.
	\end{align*}
	Next, we verify the continuity of $t\mapsto\mathcal{A}^{2,x}(\phi,\eta)(t)$. By the definition of total variation norm, for $t<t'$, and the definition of $\mathcal{A}^{2,x}(\phi,\eta)(t)$ in \eqref{characteristic-2} (which, once again we recall, is to be understood, as \eqref{eq: weak form of eq. for meas}, in the weak sense)
	\begin{align*}
	&\|\mathcal{A}^{2,x}(\phi,\eta)(t)-\mathcal{A}^{2,x}(\phi,\eta)(t')\|_{\TV}^*\\
	=&\sup_{f\in\mathcal{B}_1(X)}\int_Xf~\rd\left(\mathcal{A}^{2,x}(\phi,\eta)(t)-\mathcal{A}^{2,x}(\phi,\eta)(t')\right)\\
	\le&~\varepsilon\int_t^{t'}\sup_{f\in\mathcal{B}_1(X)}\int_Xf(y)~\rd\left(\eta_{\tau}^x(y)-H(\phi(\tau,x), \phi(\tau,y))
	~\rd\mu_X(y)\right)~\rd\tau\\
	\le&~\varepsilon|t'-t|\left(\sup_{\tau\in\cT}\|\eta_0^x-\eta_{\tau}^x\|^*_{\TV}+\sup_{\tau\in\cT}\|\eta_0^x(\cdot)-
	H(\phi(\tau,x),\phi(\tau,\cdot))\mu_X(\cdot) \|_{\TV}^*\right)\\
	\le&~\varepsilon|t'-t|(\sigma+2\|\eta_0\|^*+\|H\|_{\infty})\to0,\quad \text{as}\quad |t-t'|\to0.
	\end{align*}
	Here in the last inequality we used that
	\begin{align*}
	&~\sup_{\tau\in\cT}\|\eta_0^x(\cdot)-
	H(\phi(\tau,x),\phi(\tau,\cdot))\mu_X(\cdot) \|_{\TV}^*\\
	&~\le \|\eta_0\|^* + 	\sup_{\tau\in\cT} \sup_{f \in \mathcal{B}_1(X)}\int_X \underbrace{ f(y)H(\phi(\tau,x),\phi(\tau,y)) }_{ \le \| H \|_\infty}~\rd\mu_X(y) \\
	&~\le \|\eta_0\|^*+\|H\|_{\infty}.
	\end{align*}

	\item[Step~1.3.] We show $x\mapsto\mathcal{A}^{x}(\phi,\eta)(t)$ is continuous provided $x\mapsto\eta_0^x$ is continuous. First, we verify the continuity of $\mathcal{A}^{1,x}(\phi,\eta)(t)$ in $x$. For $x,x' \in X, t \in \cT$ we have
		\begin{align*}
		|\mathcal{A}^{1,x}(\phi,\eta)(t)-\mathcal{A}^{1,x'}(\phi,\eta)(t) |
		& \leq  | \phi_0(x) - \phi_0(x')| \\
		& \quad + \int_{t_0}^t |\omega (x',\phi(x,\tau),\tau) - \omega (x',\phi(x',\tau),\tau)| ~\rd\tau\\
		& \quad + \Big| \int_{t_0}^t D(\phi(\tau,x), \phi(\tau,y)) \rd \Big( \eta_{\tau}^x -\eta_{\tau}^{x'} \Big) (y) \Big|\\
		& \quad + \Big| \int_{t_0}^t D(\phi(\tau,x), \phi(\tau,y)) - D(\phi(\tau,x'), \phi(\tau,y))~\rd\eta_{\tau}^{x'} (y) \Big| \\
		&\leq 	 | \phi_0(x) - \phi_0(x')| \\
		& \quad + (t- t_0)\Lip(\omega) d_X(x,x') + \Lip(\omega)\int_{t_0}^t |\phi(x,\tau) -\phi(x',\tau)|~ \rd\tau\\
		& \quad + \| D \|_\infty \int_{t_0}^t\|\eta_{\tau}^x-\eta_{\tau}^{x'}\|_{\TV}^*~\rd\tau\\
		& \quad + \Lip(D) \int_{t_0}^t |\phi(x,\tau) -\phi(x',\tau)| \underbrace{\|\eta_{\tau}^{x'}\|^*_{\TV}}_{\leq \| \eta_0 \|^* + \sigma}~ \rd\tau\\	
		& \quad \to 0,\quad \text{as}\quad d_X(x,x')\to0,
		\end{align*}
		due to the Dominated Convergence Theorem,
		since $\phi_0\in C(X,\mathbb{T})$, as well as $\|\eta_{\tau}^x-\eta_{\tau}^{x'}\|_{\TV}^*\to0$, $|\phi(\tau,x)-\phi(\tau,x')|\to0$, as $d_X(x,x')\to0$ for every $\tau\in\cT$.
	
	Next, we verify the continuity of $x\mapsto\mathcal{A}^{2,x}(\phi,\eta)(t)$. By the definition of total variation norm, for $x,x'\in X,  t\in \cT$, and the definition of $\mathcal{A}^{2,x}(\phi,\eta)(t)$ in \eqref{characteristic-2} we have 
	\begin{align*}
	&\|\mathcal{A}^{2,x}(\phi,\eta)(t)-\mathcal{A}^{2,x'}(\phi,\eta)(t)\|_{\TV}^*\\
	=&~\sup_{f\in\mathcal{B}_1(X)}\int_Xf~\rd\left(\mathcal{A}^{2,x}(\phi,\eta)(t)-\mathcal{A}^{2,x'}(\phi,\eta)(t)\right)\\
	\le&~\varepsilon \|\eta_0^x-\eta_0^{x'}\|^*_{\TV}+\varepsilon\int_{t_0}^{t}\|\eta_{\tau}^x-\eta_{\tau}^{x'}\|^*_{\TV}~\rd\tau\\
	&+\varepsilon\int_{t_0}^{t}\int_X|H(\phi(\tau,x),\phi(\tau,y))-H(\phi(\tau,x'),\phi(\tau,y))|~\rd\mu_X(y)~\rd\tau\\
	\le&~\varepsilon \|\eta_0^x-\eta_0^{x'}\|^*_{\TV}+\varepsilon\int_{t_0}^{t}\|\eta_{\tau}^x-\eta_{\tau}^{x'}\|_{\TV}^*~\rd\tau\\
	&+\varepsilon\Lip(H)\int_{t_0}^{t}|\phi(\tau,x)-\phi(\tau,x')|~\rd\tau\to0,\quad \text{as}\quad d_X(x,x')\to0,
	\end{align*}
	due to the Dominated Convergence Theorem, since $\eta_0\in C(X,\cM(X))$, as well as $\|\eta_{\tau}^x-\eta_{\tau}^{x'}\|_{\TV}^*\to0$, $|\phi(\tau,x)-\phi(\tau,x')|\to0$, as $d_X(x,x')\to 0$ for every $\tau\in\cT$.
	
	\item[Step~1.4.] In the last part of the first step, we also have to show that $$\|\mathcal{A}^{2}(\phi,\eta)-\eta_0\|^*=\sup_{t\in\cT}\sup_{x\in X}\|\mathcal{A}^{2,x}(\phi,\eta)(t)-\eta_0^x\|_{\TV}^*\le\sigma.$$ 
	Indeed, since $\mathcal{A}^{2,x}(\phi,\eta)(0)=\eta_0^x$, by Step~1.2.,
	\begin{align*} 
	\|\mathcal{A}^{2,x}(\phi,\eta)(t)-\eta_0^x\|^*_{\TV}\le&~\varepsilon|t-t_0|(\sigma+\|\eta_0\|^*+\|H\|_{\infty})
	\le\varepsilon t_*(\sigma+\|\eta_0\|^*+\|H\|_{\infty})\le\sigma.
	\end{align*}
\end{enumerate}

\item[Step~2.] Next, we want to show that $\mathcal{A}^n$ is a contraction mapping for some $n\in\N$. So we consider $(\phi,\eta),\ (\widetilde{\phi},\widetilde{\eta})\in\Omega$. We estimate the first component of the operator applied to the difference as follows
\begin{align*}
&\left|\mathcal{A}^{1,x}(\phi,\eta)(t)-\mathcal{A}^{1,x}(\widetilde{\phi},\widetilde{\eta})(t)\right|\\
\le&	\left|\int_{t_0}^t \omega(x, \phi(\tau,x),\tau) - \omega(x, \widetilde{\phi}(\tau,x),\tau) ~\rd\tau \right| \\
&+	\left|\int_{t_0}^t\int_X(D(\phi(\tau,x),\phi(\tau,y))
-D(\widetilde{\phi}(\tau,x),\widetilde{\phi}(\tau,y)))
~\rd\eta_0^x(y)~\rd\tau\right|\\ &+\left|\int_{t_0}^t\int_X(D(\phi(\tau,x),\phi(\tau,y))
-D(\widetilde{\phi}(\tau,x),\widetilde{\phi}(\tau,y)))
~\rd(\eta_{\tau}^x(y)-\eta_0^x(y))~\rd\tau\right|\\
&+\left|\int_{t_0}^t\int_XD(\widetilde{\phi}(\tau,x)\widetilde{\phi}(\tau,y))
~\rd\left(\eta_{\tau}^x(y)-\widetilde{\eta}_{\tau}^x(y)\right)~\rd\tau\right|\\
\le& \Lip(\omega) \int_{t_0}^t \|\phi(\tau,\cdot)-\widetilde{\phi}(\tau,\cdot)\|_\infty~\rd\tau \\
&+2\Lip(D)|\eta_0^x|(X)\int_{t_0}^t \|\phi(\tau,\cdot)-\widetilde{\phi}(\tau,\cdot)\|_\infty~\rd\tau\\
&+2\Lip(D)\sup_{\tau\in\cN}\|\eta_{\tau}^x - \eta_0^x\|_{\TV}^*\int_{t_0}^t
\|\phi(\tau,\cdot)-\widetilde{\phi}(\tau,\cdot)\|_\infty~\rd\tau\\ &+\|D\|_{\infty}\left|\int_{t_0}^td_{\TV}(\eta_{\tau}^x,\widetilde{\eta}_{\tau}^x)~\rd\tau\right|.
\end{align*}
Moreover, we have
\begin{align*}
&\|\mathcal{A}^{2,x}(\phi,\eta)(t)-\mathcal{A}^{2,x}(\widetilde{\phi},\widetilde{\eta})(t)\|^*_{\TV}
=\sup_{f\in\mathcal{B}_1(X)}\int_Xf~\rd\left(\mathcal{A}^{2,x}(\phi,\eta)(t)-
\mathcal{A}^{2,x}(\widetilde{\phi},\widetilde{\eta})(t)\right)\\
\le&~\varepsilon\left(\left|\int_{t_0}^t\|\eta_{\tau}^x-\widetilde{\eta}_{\tau}^x\|^*_{\TV}~\rd\tau\right|
+\left|\int_{t_0}^t\int_X|H(\phi(\tau,x),\phi(\tau,y))
-H(\widetilde{\phi}(\tau,x),\widetilde{\phi}(\tau,y))|~\rd\tau~\rd \mu_X(y)\right|\right)\\
\le&~\varepsilon\left(\left|\int_{t_0}^t\|\eta_{\tau}^x-\widetilde{\eta}_{\tau}^x\|_{\TV}^*~\rd\tau\right|+
2\Lip(H)\left|\int_{t_0}^t\|\phi(\tau,\cdot)-\widetilde{\phi}(\tau,\cdot)\|_\infty~\rd\tau\right|\right).
\end{align*}
Combining the two previous estimates and using the triangle inequality we finally get (recall the definition of the metric $d_{\infty}$ in \eqref{eq: metric tilded})
\begin{alignat*}
{2}
d_{\infty}\left(\mathcal{A}(\phi,\eta)(t),\mathcal{A}(\widetilde{\phi},\widetilde{\eta})(t)\right)
\le M_3\left|\int_{t_0}^t d_{\infty}((\phi(\tau),\eta_\tau),(\widetilde{\phi}(\tau),\widetilde{\eta}_\tau))~\rd\tau\right|,
\end{alignat*}
with the constant $M_3 := 4 \Big(\Lip(\omega) + \Lip(D) + \|D\|_\infty + \varepsilon(1+ \Lip(H)) \Big)(\|\eta_0\|^*+\sigma)$.
Taking the supremum over $t \in  \mathcal{T}$ in both sides of last inequality we obtain
\begin{equation}\label{inequality-contraction}
d_{\mathcal{T},\infty}\left(\mathcal{A}(\phi,\eta),\mathcal{A}(\widetilde{\phi},\widetilde{\eta})\right)
\le M_3 t_* d_{\mathcal{T},\infty}((\phi,\eta),(\widetilde{\phi},\widetilde{\eta})).
\end{equation}
Hence for small enough $t_*$ (such that $M_3 t_*<1$) we have that $\mathcal{A}$ is a contraction. By the Banach contraction mapping principle, which is applicable due to Proposition \ref{prop: complete metr spa} and \eqref{inequality-contraction}, there exists a unique fixed point $P$ of $\mathcal{A}$. Hence, there exists a unique solution $(\phi, \eta)$ for $t\in\cT$ in $\Omega\subseteq C(\cT\times X,\mathbb{T}\times\cM(X))$ to the continuum limit equation \eqref{characteristic-Eq-1}. 
\item[Step~3.]
By the previous steps we have established a unique solution in $[t_0,t_0+t_*]$.
We now want to show that a global solution exists  on $[t_0,+\infty)$.
By Zorn's lemma (transfinite induction), one can always extend the solution by repeating Steps~1-2 indefinitely up to a maximal existence time $T_{\max}$ with the dichotomy:
\begin{enumerate}
	\item[(i)] $\lim_{t\uparrow T_{\max}}|\phi(t)|+\|\eta\|^*=\infty$;
	\item[(ii)] $T_{\max}=+\infty$.
\end{enumerate}
Note that $|\phi(t)|\le1$ since $\phi(t)\in\mathbb{T}$.
Moreover, by  \eqref{characteristic-Eq-1b},
\begin{align*}
\|\eta \|^*\le& \|\eta_0\|^*+\|H\|_{\infty}\varepsilon\int_{t_0}^t\txte^{-\varepsilon(t-\tau)}\rd\tau
\le\|\eta_0\|^*+\|H\|_{\infty},\quad t\ge t_0,
\end{align*}
which implies case (i) will never occur. Hence $T_{\max}=+\infty$.
\item[Step~4.]
%By Steps~1~and 2~, we have only shown the uniqueness within $\Omega$.
By Steps 1-3, there exists a global solution $(\phi(t), \eta_t) \in C([t_0,+\infty)\times X,\mathbb{T}\times\cM(X))$ of the continuum limit equation \eqref{characteristic-Eq-1}.
Next, we show that the solution is unique in $C(\cT_{t_0,T}\times X,\mathbb{T}\times\cM(X))$, for any $T>0$. Let $(\phi,\eta),\ (\widetilde{\phi},\widetilde{\eta})\in C(\cT_{t_0,T}\times X,\mathbb{T}\times\cM(X))$ be two solutions to the IVP of \eqref{characteristic-Eq-1}. Similar as in \eqref{inequality-contraction}, one can show: For $t \in \mathcal{T}_{t_0,T}$,
\begin{align*}
d_{\infty}((\phi(t),\eta_t),(\widetilde{\phi}(t),\widetilde{\eta}_t))\le M_3\int_{t_0}^t d_{\infty}((\phi(\tau),\eta_{\tau}),(\widetilde{\phi}(\tau),\widetilde{\eta}_{\tau}))~\rd\tau
\end{align*}
which implies by Gronwall inequality that
\begin{equation*}
d_{\infty}((\phi(t),\eta_t),(\widetilde{\phi}(t),\widetilde{\eta}_t))=0.
\end{equation*}
This shows that the solution to the IVP of \eqref{characteristic-Eq-1} is unique and in $C(\cT_{t_0,T}\times X,\mathbb{T}\times\cM(X))$.

\item[Step~5.] By Steps 1-4, there exists a unique global solution $(\phi(t), \eta_t) \in C([t_0,+\infty)\times X,\mathbb{T}\times\cM(X))$ of the continuum limit equation \eqref{characteristic-Eq-1}. It only remains to prove the last statement about the positivity of $\eta_t$. Since $\eta$ solves \eqref{characteristic-Eq-1b},  for every $x\in X$, we calculate that
\begin{align*}
\eta_t^x(y)=&~\txte^{-\varepsilon (t-t_0)}\eta_0^x(y)-\varepsilon\int_{t_0}^t\txte^{-\varepsilon(t-\tau)}\left(H(\phi(\tau,x),\phi(\tau,y))\right) \mu_X(y)~\rd\tau,
\end{align*}
where last equality is to be understood in the weak sense, cf.~\eqref{eq: weak form of eq. for meas}. For any $B\in\mathfrak{B}(X)$\footnote{Here $\mathfrak{B}(X)$ is the set of Borel subsets of $X$.}, let $\chi_B$ denote the characteristic function on the set $B$. For any
$x\in X$  and $t \in \mathcal{T}_{t_0,T}$ we have \footnote{Note that by a standard approximation argument we can replace the test function $f$ by $\chi_B$ in \eqref{eq: weak form of eq. for meas}.}
\begin{align*}
\eta_t^x(B) &= \int_X \chi_B(y) \rd \eta_t^x(y) \\
=&~\txte^{-\varepsilon (t-t_0)}\eta_0^x(B)-\varepsilon\int_{t_0}^t\txte^{-\varepsilon(t-\tau)}\int_B\left(H(\phi(\tau,x),\phi(\tau,y))\right) \rd\mu_X(y)\rd\tau\\
\ge&~\txte^{-\varepsilon (t-t_0)}\eta_0^x(B)-\varepsilon\int_{t_0}^t\txte^{-\varepsilon(t-\tau)}\|H\|_{\infty} \int_B \rd\mu_x(y)\rd\tau\\
\ge&~\txte^{-\varepsilon (t-t_0)}\eta_0^x(B)-\varepsilon\|H\|_{\infty}\mu_X(B)\int_{t_0}^t\txte^{-\varepsilon(t-\tau)}\rd\tau\\
=&~\txte^{-\varepsilon (t-t_0)}\eta_0^x(B)-\|H\|_{\infty}\mu_X(B)(1-\txte^{-\varepsilon (t-t_0)})>0,
\end{align*}
provided 
$$\inf_{y\in X}\frac{\rd\eta_0^x(y)}{\rd\mu_X(y)}\ge\sup_{t\in\cT_{t_0,T}}\|H\|_{\infty}(\txte^{\varepsilon (t -t_0)}-1)=\|H\|_{\infty}(\txte^{\varepsilon T }-1),$$ 
for all $x\in X$.
\end{enumerate}
\end{proof}

\section{Continuous dependence}
\label{sec: reg}

Having established existence and uniqueness of solutions of the integro-differential equations \eqref{characteristic-Eq-1}, we next want to prove that such equations behave regularly in the sense that they depend continuously on the initial data and on the function $\omega$. Recall (cf.~equation \eqref{eq: metric tilded}) that for any $(\phi,\eta),(\varphi,\xi) \in C(X, \mathbb{T} \times \cM(X))$,
\begin{equation*}
d_{\infty}((\phi,\eta),(\varphi,\xi)) :=  \| \phi - \varphi \|_\infty +  \|\eta-\zeta\|^*.
\end{equation*}

\begin{prop}
\label{prop: reg}
\textbf{(i) (Continuous dependence on the initial conditions)} \\
Let	$(\phi_0,\eta_0),(\widetilde{\phi}_0,\widetilde{\eta}_0)\in C(X,\mathbb{T}\times\cM(X))$. Let $(\phi,\eta),\ (\widetilde{\phi},\widetilde{\eta})\in C(\mathcal{T}_{t_0,T}\times X,\mathbb{T}\times\cM(X))$ be solutions to \eqref{characteristic-Eq-1a} and \eqref{characteristic-Eq-1b} subject to the initial conditions $(\phi_0,\eta_0),(\widetilde{\phi}_0,\widetilde{\eta}_0)$, respectively. Then we have
\begin{equation*}
d_{\infty}((\phi(t),\eta(t)),(\widetilde{\phi}(t),\widetilde{\eta}(t))) \le  d_{\infty}((\phi_0,\eta_0),(\widetilde{\phi}_0,\widetilde{\eta}_0)) \txte^{C_1 t},\quad t \in \mathcal{T}_{t_0,T}.
\end{equation*}
\textbf{(ii) (Continuous dependence on $\omega$)} \\
Let	$(\phi_0,\eta_0)\in C(X,\mathbb{T}\times\cM(X))$ and $\omega, \tilde{\omega} \in C(X)$. Let $(\phi,\eta),\ (\widetilde{\phi},\widetilde{\eta})\in C(\mathcal{T}_{t_0,T}\times X,\mathbb{T}\times\cM(X))$ be solutions to \eqref{characteristic-Eq-1} and \eqref{characteristic-Eq-1} with $\omega$ replaced by $\tilde{\omega}$, respectively. Then we obtain
\begin{equation*}
d_{\infty}((\phi(t),\eta_t),(\widetilde{\phi}(t),\widetilde{\eta}_t)) \le \Big(T \| \omega - \tilde{\omega} \|_\infty \Big) \txte^{C_1t},\quad t \in \mathcal{T}_{t_0,T}.
\end{equation*}
\end{prop}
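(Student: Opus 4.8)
The plan is to run the same contraction-type estimate used in Step~2 of the proof of Theorem~\ref{thm A}, but applied to the difference of two solutions with different data: the difference of the initial data (part~(i)) or of the frequency functions (part~(ii)) now appears as an inhomogeneous term on the right-hand side, and I would close the resulting integral inequality with Gronwall's inequality. Throughout I would use the \emph{global} a-priori bound $\sup_{t\ge t_0}\|\eta_t\|^*\le\|\eta_0\|^*+\|H\|_\infty$ from Step~3 of that proof in place of the local bound $\|\eta_0\|^*+\sigma$ --- this is precisely what keeps the Gronwall constant $C_1$ uniform over $\mathcal{T}_{t_0,T}$.

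For part~(i), I would write both solutions in the integral form \eqref{characteristic-Eq-int-1}, subtract, and set $u(t):=d_\infty((\phi(t),\eta_t),(\widetilde\phi(t),\widetilde\eta_t))$. For the phase component, the coupling term is split in the standard way,
\begin{align*}
&\int_X D(\phi(\tau,x),\phi(\tau,y))\,\rd\eta_\tau^x(y)-\int_X D(\widetilde\phi(\tau,x),\widetilde\phi(\tau,y))\,\rd\widetilde\eta_\tau^x(y)\\
&\qquad=\int_X\big[D(\phi(\tau,x),\phi(\tau,y))-D(\widetilde\phi(\tau,x),\widetilde\phi(\tau,y))\big]\,\rd\eta_\tau^x(y)+\int_X D(\widetilde\phi(\tau,x),\widetilde\phi(\tau,y))\,\rd\big(\eta_\tau^x-\widetilde\eta_\tau^x\big)(y),
\end{align*}
so that, using (A1), the a-priori bound on $|\eta_\tau^x|(X)$, and (A3) for the $\omega$-difference, I would get $d_{\mathbb{T},\infty}(\phi(t),\widetilde\phi(t))\le\|\phi_0-\widetilde\phi_0\|_\infty+C\int_{t_0}^t u(\tau)\,\rd\tau$. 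For the measure component, subtracting the weak form \eqref{eq: weak form of eq. for meas} and taking the supremum over $f\in\mathcal{B}_1(X)$ gives, exactly as in Step~2,
\[\|\eta_t^x-\widetilde\eta_t^x\|^*_{\TV}\le\|\eta_0^x-\widetilde\eta_0^x\|^*_{\TV}+\varepsilon\int_{t_0}^t\|\eta_\tau^x-\widetilde\eta_\tau^x\|^*_{\TV}\,\rd\tau+\varepsilon\Lip(H)\int_{t_0}^t\|\phi(\tau,\cdot)-\widetilde\phi(\tau,\cdot)\|_\infty\,\rd\tau.\]
Adding the two estimates and taking the supremum over $x\in X$ should yield $u(t)\le u(t_0)+C_1\int_{t_0}^t u(\tau)\,\rd\tau$, with $C_1$ depending only on $\Lip(\omega),\Lip(D),\Lip(H),\|D\|_\infty,\varepsilon,\|\eta_0\|^*,\|H\|_\infty$ and with $u(t_0)=d_\infty((\phi_0,\eta_0),(\widetilde\phi_0,\widetilde\eta_0))$; Gronwall's inequality then gives the claimed exponential bound.

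For part~(ii) the initial data coincide, so $u(t_0)=0$, but the phase equation now carries the additional term $\int_{t_0}^t\big[\omega(x,\widetilde\phi(\tau,x),\tau)-\widetilde\omega(x,\widetilde\phi(\tau,x),\tau)\big]\,\rd\tau$, which I would bound by $(t-t_0)\|\omega-\widetilde\omega\|_\infty\le T\|\omega-\widetilde\omega\|_\infty$; every other term is estimated exactly as in part~(i) (replacing $\Lip(\omega)$ by $\max\{\Lip(\omega),\Lip(\widetilde\omega)\}$ where needed). This produces $u(t)\le T\|\omega-\widetilde\omega\|_\infty+C_1\int_{t_0}^t u(\tau)\,\rd\tau$, and Gronwall's inequality again yields $u(t)\le T\|\omega-\widetilde\omega\|_\infty\,\txte^{C_1 t}$.

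The whole argument is essentially bookkeeping once the splitting above is set up, so I do not expect a genuine obstacle. The two points needing a little care are: (a) using the global rather than the local a-priori bound on $\|\eta\|^*$, so that $C_1$ does not depend on $t$; and (b) in part~(ii), noting that $\widetilde\omega$ is tacitly assumed to satisfy (A3) as well, so that the compared solution exists and satisfies the same bounds --- without this the statement would not even be well posed.
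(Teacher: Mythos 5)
Your proposal is correct and follows essentially the same route as the paper: the same splitting of the coupling integral, the same weak-form estimate for the measure component, and the same Gronwall closure with a constant $C_1$ controlled by the a-priori bound on $\|\eta\|^*$ (the paper simply writes $\|\eta\|^*$ inside $C_1$, which is exactly the global bound $\|\eta_0\|^*+\|H\|_\infty$ you invoke). The only cosmetic difference is a factor of $2$ in the $\varepsilon\Lip(H)$ term (each of the two arguments of $H$ contributes $\Lip(H)\|\phi(\tau,\cdot)-\widetilde\phi(\tau,\cdot)\|_\infty$), which is absorbed into $C_1$ and does not affect the result.
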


\begin{proof}
\textbf{(i)} We calculate for any $t \in \mathcal{T}_{t_0,T}$ and $x \in X$ that

\begin{align*}
\left|\phi(t,x)-\widetilde{\phi}(t,x)\right|
&\le
\| \phi_0 - \widetilde{\phi}_0 \|_\infty \\
&\quad+ \left| \int_{t_0}^t \omega(x, \phi(\tau,x),\tau) - \omega(x, \widetilde{\phi}(\tau,x),\tau)  ~\rd\tau \right| \\
&\quad+\left|\int_{t_0}^t\int_X(D(\phi(\tau,x),\phi(\tau,y))-D(\widetilde{\phi}(\tau,x),\widetilde{\phi}(\tau,y)))~\rd\eta_\tau^x(y)~\rd\tau\right|\\
&\quad +\left|\int_{t_0}^t\int_XD(\widetilde{\phi}(\tau,x),\widetilde{\phi}(\tau,y))~\rd\left(\eta_{\tau}^x(y)-\widetilde{\eta}_{\tau}^x(y)\right)~\rd\tau\right|\\
&\le  \|\phi_0 - \widetilde{\phi}_0\|_\infty \\
&\quad + \Lip(\omega) \int_{t_0}^t \|\phi(\tau, \cdot) - \widetilde{\phi}(\tau,\cdot ) \|_\infty~\rd\tau \\
& \quad + 2\Lip(D)\int_{t_0}^t \|\phi(\tau, \cdot) - \widetilde{\phi}(\tau,\cdot ) \|_\infty \|\eta_\tau \|^*~\rd\tau \\
&\quad + \|D\|_\infty\int_{t_0}^t \|\eta_{\tau}^x-\widetilde{\eta}_{\tau}^x \|_{\TV}^*~\rd\tau
\end{align*}
Moreover, by \eqref{eq: weak form of eq. for meas} we have
\begin{align*}
\|\eta_t^x-\widetilde{\eta}_t^x\|^*_{\TV}
&=\sup_{f\in\mathcal{B}_1(X)}\int_Xf~\rd\left(\eta_t^x-\widetilde{\eta}_t^x\right)\\
\le&~\|\eta_0-\widetilde{\eta}_0\|^*_{\TV} 
+ \varepsilon\left|\int_{t_0}^t\|\eta_{\tau}^x-\widetilde{\eta}_{\tau}^x\|^*_{\TV}~\rd\tau\right|\\
&+\varepsilon\left|\int_{t_0}^t\int_X|H(\phi(\tau,x),\phi(\tau,y))
-H(\widetilde{\phi}(\tau,x),\widetilde{\phi}(\tau,y))|~\rd\tau~\rd \mu_X(y)\right|\\
\le&~  \|\eta_0-\widetilde{\eta}_0\|^*_{\TV} + \varepsilon\int_{t_0}^t\|\eta_{\tau}^x-\widetilde{\eta}_{\tau}^x\|^*_{\TV}~\rd\tau\\
&+2 \varepsilon\Lip(H) \int_{t_0}^t \|\phi(\tau, \cdot) - \widetilde{\phi}(\tau,\cdot) \|_\infty~ \rd\tau.
\end{align*}
Hence, combining the last two inequalities, we may conclude that
\begin{align*}
d_{\infty}((\phi(t),\eta_t),(\widetilde{\phi}(t),\widetilde{\eta}_t))&\le d_{\infty}((\phi_0,\eta_0),(\widetilde{\phi}_0,\widetilde{\eta}_0)) + C_1 \int_{t_0}^t d_{\infty}((\phi(\tau),\eta_{\tau}),(\widetilde{\phi}(\tau),\widetilde{\eta}_{\tau}) \rd\tau,
\end{align*}
where $C_1:= \Big(\Lip(\omega) + 2\Lip(D)\|\eta \|^{*}  + 2\varepsilon\Lip(H)  + \varepsilon + \| D \|_\infty\Big)$.
Finally, by Gronwall's inequality it follows that
\begin{equation*}
d_{\infty}((\phi(t),\eta_t),(\widetilde{\phi}(t),\widetilde{\eta}_t))\le d_{\infty}((\phi_0,\eta_0),(\widetilde{\phi}_0,\widetilde{\eta}_0)) \txte^{C_1 t}.
\end{equation*}
\textbf{(ii)} We have that
\begin{align*}
\left|\phi(t,x)-\widetilde{\phi}(t,x)\right|\le &
\left| \int_{t_0}^t \omega(x, \phi(\tau,x),\tau) - \widetilde{\omega}(x, \widetilde{\phi}(\tau,x),\tau)  \rd\tau \right| \\ &+\left|\int_{t_0}^t\int_X(D(\phi(\tau,x),\phi(\tau,y))-D(\widetilde{\phi}(\tau,x),\widetilde{\phi}(\tau,y)))
~\rd\eta_\tau^x(y)~\rd\tau\right|\\
&+\left|\int_{t_0}^t\int_XD(\widetilde{\phi}(\tau,x),\widetilde{\phi}(\tau,y))
~\rd\left(\eta_{\tau}^x(y)-\widetilde{\eta}_{\tau}^x(y)\right)~\rd\tau\right|\\
\le& \left| \int_{t_0}^t \omega(x, \phi(\tau,x),\tau) - \omega(x, \widetilde{\phi}(\tau,x),\tau)~\rd\tau \right| \\
&+ \left| \int_{t_0}^t \omega(x, \widetilde{\phi}(\tau,x),\tau) - \widetilde{\omega}(x, \widetilde{\phi}(\tau,x),\tau)  ~\rd\tau \right| \\ &+\left|\int_{t_0}^t\int_X(D(\phi(\tau,x),\phi(\tau,y))-D(\widetilde{\phi}(\tau,x),\widetilde{\phi}(\tau,y)))
~\rd\eta_\tau^x(y)~\rd\tau\right|\\
&+\left|\int_{t_0}^t\int_XD(\widetilde{\phi}(\tau,x),\widetilde{\phi}(\tau,y))
~\rd\left(\eta_{\tau}^x(y)-\widetilde{\eta}_{\tau}^x(y)\right)~\rd\tau\right| \\
\le& \Lip(\omega) \int_{t_0}^t \|\phi(\tau, \cdot) - \widetilde{\phi}(\tau,\cdot) \|_\infty ~\rd\tau \\
&+ T \|\omega - \tilde{\omega}\|_\infty\\
&+\left|\int_{t_0}^t\int_X(D(\phi(\tau,x),\phi(\tau,y))-D(\widetilde{\phi}(\tau,x),\widetilde{\phi}(\tau,y)))
~\rd\eta_\tau^x(y)
~\rd\tau\right|\\
&+\left|\int_{t_0}^t\int_XD(\widetilde{\phi}(\tau,x),\widetilde{\phi}(\tau,y))
~\rd\left(\eta_{\tau}^x(y)-\widetilde{\eta}_{\tau}^x(y)\right)~\rd\tau\right|.
\end{align*}
Similarly, one can obtain estimates for $\|\eta_t^x-\widetilde{\eta}_t^x\|^*_{\TV}$ as in \textbf{(i)}. Moreover, we obtain following inequality:
\begin{align*}
d_{\infty}((\phi(t),\eta_t),(\widetilde{\phi}(t),\widetilde{\eta}_t))&\le T \|\omega - \tilde{\omega} \|_\infty + C_1\int_{t_0}^t d_{\infty}((\phi(\tau),\eta_{\tau}),(\widetilde{\phi}(\tau),\widetilde{\eta}_{\tau}))~\rd\tau
\end{align*} 
and hence it again follows from Gronwall's inequality  that
\begin{equation*}
d_{\infty}((\phi(t),\eta_t),(\widetilde{\phi}(t),\widetilde{\eta}_t))\le \Big(T \|\omega - \tilde{\omega} \|_\infty \Big) \txte^{C_1t}.
\end{equation*}
This concludes the proof.
\end{proof}

From results established in Sections \ref{sec: ex and un} and \ref{sec: reg}, the integro-differential equation \eqref{characteristic-Eq-1} is well-posed. Note carefully, that the well-posedness does only require the conditions (A1)-(A4), and not the absolute continuity condition (A5). In particular, one only requires the continuity of the measure-valued function $x\mapsto \eta_0^x$. This is indeed strictly weaker than absolute continuity with respect to $\mu_X$ as demonstrated in the example below.

\begin{ex} \textbf{(Weighted graphs on the Cantor set)}\\
Let $X=[0,1]$ and $\mathfrak{C} = \bigcap_{n \in \mathbb{N}_0} F_n$ be the middle-third Cantor set, where $F_0:= I^0_1=[0,1]$ and $F_{n}=\cup_{j=1}^{2^n}I^n_j\subsetneq F_{n-1}$ consisting of $2^n$ disjoint closed intervals, for $n \geq 1$. Recall that for every $n\in\N$, $\mu_X(I^n_j\cap \mathfrak{C}) = \frac{1}{2^n}$ for any $j=1,\ldots,2^n$. Hence for every $n\in\N$, every $x\in\mathfrak{C}\setminus\{1\}$, there exists $1\le j=j_x\le 2^n$ such that $x\in I^n_j$. Let $\mu_X$ be the uniform measure over $\mathfrak{C}$ and $F_{\mu_X}$ be its distribution function. Note that $x\mapsto F_{\mu_X}(x)$ is continuous on $X$. Let $\eta^x\in\mathcal{M}_+(X)$ be such that its generalized distribution function is $$F_{\eta^x}(z)\colon=\int_0^z\rd\eta^x=\max\{0,F_{\mu_X}(z)-F_{\mu_X}(x)\},\quad z\in Z.$$ 
Since $\mu_X$ is continuous in distribution but not absolutely continuous, so is $\eta^x$ for all $x<1$. Moreover, we have
$$\rd\eta^x(z)=\rd\mu_X(z),\quad \text{for}\ z>x.$$
Next, we show $x\mapsto\eta^x$ is continuous in the strong topology induced by total variation distance as well. Let $X\ni x_k\to x$ as $k\to\infty$. Hence for every $n\in\N$, there exists $K\in\N$ such that $x_k,x\in I^n_j$ for some $1\le j=j_x\le n$. Let $A\in\mathfrak{B}(X)$. Note that $$\rd(\eta^x(z)-\eta^{x_k}(z))=0,\quad \text{for}\ z\in X\setminus[\min\{x,x_k\},\max\{x,x_k\}].$$ Hence for all $k\ge K$, 
\begin{align*}
|\eta^x(A)-\eta^{x_k}(A)|=&\left|\int_A\rd(\eta^x-\eta^{x_k})\right|
=\left|\int_{A\cap[\min\{x,x_k\},\max\{x,x_k\}]}\rd(\eta^x-\eta^{x_k})\right|\\
=&\left|\int_{A\cap[\min\{x,x_k\},\max\{x,x_k\}]}\rd\eta_{\min\{x,x_k\}}\right|
=\int_{A\cap[\min\{x,x_k\},\max\{x,x_k\}]}\rd\mu_X\\
\le&\int_{[\min\{x,x_k\},\max\{x,x_k\}]}\rd\mu_X
\le\mu_X(I^n_j)\le2^{-n},
\end{align*}
which implies $d_{\TV}(\eta^x,\eta^{x_k})\le\mu_X(I^n_j)\le2^{-n}$, and verifies that
$\lim_{k\to\infty}d_{\TV}(\eta^x,\eta^{x_k})=0$. 	
\end{ex}

In particular, the last example already shows very clearly the advantages of working with (limiting) graphs as measures without requiring a density. Our next goal is to show that the solutions to \eqref{characteristic-Eq-1} can be approximated by those of the discretized models of the form \eqref{coevolution} under additional assumptions as stated in Theorem~\ref{thm: discr approx}.

\section{Discrete approximation}
\label{sec: disc apr}

In order to be able to compare solutions of the ODE models \eqref{coevolution} with that of the continuum counterpart \eqref{characteristic-Eq-1}, we first show that \eqref{coevolution} can be rewritten in the form of \eqref{characteristic-Eq-1}.

\begin{prop}
\label{prop: ODE as integro-differ eq}
\textbf{(i)} Let $\Big( (\phi_j^N)_{j=1}^N,(W_{i,j}^N)_{i,j = 1}^N \Big) \in C(\cT_{t_0,T}, \mathbb{T}^N\times \mathbb{R}^{N\times N})$ be the solution to \eqref{coevolution} with \eqref{eq: omegas}, \eqref{eq: weigths 1} or \eqref{eq: weights 2}, and \eqref{eq: init cond}. Then $(\phi^N, \eta^N) \in C(\cT_{t_0,T}, L^\infty(X,\mathbb{T})) \times  C(\cT_{t_0,T}, \mathcal{B}(X,\cM(X)))$ defined in \eqref{eq: weights in intro 1} and \eqref{eq: step graphon intro}, solves the integro-differential equation \eqref{characteristic-Eq-1} with	
\begin{equation}
\label{eq: inti cond for the ODE cont lim}
\phi_0^N(x) := \sum_{j= 1}^N  \phi_j^{N,0}\chi_{X_j^N}(x), \quad \omega^N(x,u) := \sum_{j= 1}^N \omega_j^N(u)\chi_{X_j^N}(x).
\end{equation}
\textbf{(ii)} Let $\phi^N \colon\begin{cases} \cT_{t_0,T}\times X \to \mathbb{R},\\ (t,x)\mapsto\sum_{j=1}^N \phi^N_j(t) \chi_{X_j^N}(x),\end{cases}$ and~~~~$W^N \colon \begin{cases}
\cT_{t_0,T}\times X^2 \to \mathbb{R},\\
(t,x,y)\mapsto\sum_{i,j = 1}^N W^N_{i,j}(t)\chi_{X_i^N \times X_j^N}(x,y).\end{cases}$ Let
$$\eta\colon \begin{cases}
\cT_{t_0,T}\times X\to\mathcal{M}(X)\\ (t,x)\mapsto\rd(\eta^{N})_t^x(y)\colon= W^N(t;x,y)~\rd \mu_X(y),\quad y\in X.
\end{cases}$$
Assume that %$ \in C(\cT_{t_0,T}\times X;\mathbb{T}\times\cM(X))$. Assume that $(\phi^N, \eta^N)$
$(\phi^N, \eta^N) \in C(\cT_{t_0,T}, L^\infty(X,\mathbb{T})) \times  C(\cT_{t_0,T}, \mathcal{B}(X,\cM(X)))$
solves the integro-differential equation \eqref{characteristic-Eq-1} with initial conditions \eqref{eq: inti cond for the ODE cont lim}. Then
$\Big((\phi_j)_{j=1}^N,(W^N_{i,j})_{i,j = 1}^N \Big) \in C(\cT_{t_0,T}, \mathbb{T}^N\times \mathbb{R}^{N\times N})$ is the solution to the IVP \eqref{coevolution}.
\end{prop}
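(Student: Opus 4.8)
The plan is to exploit that, for each fixed $t$, all the discretized objects $\phi^N(t,\cdot)$, $\omega^N(\cdot,u,t)$ and $W^N(t;\cdot,\cdot)$ are piecewise constant on the blocks of the partition $(X^N_i)_{i\in[N]}$, and that $\mu_X(X^N_j)=1/N$. Substituting these step functions into the integral form \eqref{characteristic-Eq-int-1} collapses every integral over $X$ into a finite sum over blocks, and the claim reduces to recognizing that the resulting scalar/matrix identities are exactly the Duhamel (integral) formulations of \eqref{coevolution-a}--\eqref{coevolution-b}. I would treat \textbf{(i)} and \textbf{(ii)} as the two directions of one and the same computation.

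For \textbf{(i)}, first record the regularity statement: since $t\mapsto(\phi^N_j(t))_j$ and $t\mapsto(W^N_{i,j}(t))_{i,j}$ are continuous (they solve the ODE \eqref{coevolution}), the lifted objects satisfy $\phi^N\in C(\cT_{t_0,T},L^\infty(X,\mathbb{T}))$ and $\eta^N\in C(\cT_{t_0,T},\mathcal{B}(X,\cM(X)))$; the TV-continuity of $t\mapsto(\eta^N)^x_t$ is immediate from $\|(\eta^N)^x_t-(\eta^N)^x_s\|^*_{\TV}\le\frac1N\sum_j|W^N_{i,j}(t)-W^N_{i,j}(s)|$ for $x\in X^N_i$. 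Next fix $x\in X^N_i$. Since $\rd(\eta^N)^x_t(y)=\sum_jW^N_{i,j}(t)\chi_{X^N_j}(y)\,\rd\mu_X(y)$ and $\phi^N(t,\cdot)\equiv\phi^N_j(t)$ on $X^N_j$, one gets
\[\int_X D(\phi^N(t,x),\phi^N(t,y))\,\rd(\eta^N)^x_t(y)=\sum_{j=1}^N W^N_{i,j}(t)\,D(\phi^N_i(t),\phi^N_j(t))\,\mu_X(X^N_j)=\frac1N\sum_{j=1}^N W^N_{i,j}(t)D(\phi^N_i(t),\phi^N_j(t)),\]
together with $\omega^N(x,\phi^N(t,x),t)=\omega^N_i(\phi^N_i(t),t)$, so integrating \eqref{coevolution-a} in time shows \eqref{characteristic-Eq-int-1} holds on $X^N_i$. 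For the measure equation I would use that $(\eta^N)^x_t\ll\mu_X$, so \eqref{characteristic-Eq-1b} may be read through densities: for $(x,y)\in X^N_i\times X^N_j$ the density identity is $W^N_{i,j}(t)=W^N_{i,j}(t_0)-\varepsilon\int_{t_0}^t\big(W^N_{i,j}(\tau)+H(\phi^N_i(\tau),\phi^N_j(\tau))\big)\rd\tau$, which is precisely the integral form of \eqref{coevolution-b}; equivalently one tests \eqref{eq: weak form of eq. for meas} against $f=\chi_{X^N_k}$ (legitimate by the standard approximation argument) and divides by $\mu_X(X^N_k)=1/N$. Finally the initial data agree by \eqref{eq: init cond}, \eqref{eq: weigths 1}/\eqref{eq: weights 2} and the definition \eqref{eq: inti cond for the ODE cont lim} of $\phi_0^N,\omega^N$. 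Part \textbf{(ii)} is the same chain of equalities read in reverse: assuming the step-function pair $(\phi^N,\eta^N)$ of the stated form solves \eqref{characteristic-Eq-1} with $\omega=\omega^N$, restrict the integral identities \eqref{characteristic-Eq-int-1} to $x\in X^N_i$ — the right-hand sides are constant in $x$ there by the computation above — to recover the integral form of \eqref{coevolution-a}, and read the density form of \eqref{characteristic-Eq-1b} on $X^N_i\times X^N_j$ to recover \eqref{coevolution-b}; the initial conditions are inherited from \eqref{eq: inti cond for the ODE cont lim}.

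The integral reductions are routine bookkeeping. The only points needing a little care — and the closest thing to an obstacle — are: (a) the space-membership and total-variation continuity assertions for $\eta^N$; (b) the passage between the weak form \eqref{eq: weak form of eq. for meas} and the block-wise density identities, cleanly handled either by absolute continuity of $\eta^N$ with respect to $\mu_X$ or by the $f=\chi_B$ approximation flagged earlier in the paper; and (c) checking once and for all that \emph{both} weight prescriptions \eqref{eq: weigths 1} and \eqref{eq: weights 2} produce admissible constant initial blocks, so that the equivalence holds for either choice. None of these is serious; the proposition is in essence the statement that step-function solutions of the integro-differential equation \eqref{characteristic-Eq-1} are in bijective correspondence with solutions of the finite ODE system \eqref{coevolution}.
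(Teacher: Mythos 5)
Your proposal is correct and follows essentially the same route as the paper: fix $x\in X^N_k$, use that all lifted objects are block-constant and $\mu_X(X^N_j)=1/N$ to collapse the integrals over $X$ into the finite sums of \eqref{coevolution}, and verify the regularity/TV-continuity of $\eta^N$ exactly as you do. The only cosmetic difference is that you work with the integral form \eqref{characteristic-Eq-int-1} and block-wise densities, whereas the paper differentiates the measure-valued curve directly (via its Example \ref{exmp: measure derivative}); the paper likewise proves only \textbf{(i)} and declares \textbf{(ii)} analogous, which matches your ``same chain read in reverse.''
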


\begin{proof}
We only prove \textbf{(i)}, as \textbf{(ii)} can be shown analogously. It is not difficult to see that $\phi^N\in C(\cT_{t_0,T}, L^\infty(X,\mathbb{T}))$. Next, we deal with  $\eta^N$:
Since for any $t \in  \cT_{t_0,T}$ we have
\begin{align*}
\sup_{x \in X} \| (\eta^{N})_t^x \| ^*_{\TV} &= \sup_{x \in X} \sup_{f \in \cB_1(X)} \int_X f(y) W^N(t;x,y)~\rd\mu_X(y) \leq \sup_{i,j \in [N]} | W^N_{i,j}(t)|,
\end{align*}
we have that $\eta_t \in \mathcal{B}(X,\cM(X))$. Similarly,  for $t, \tilde{t} \in \cT_{t_0,T}$ with $t \to \tilde{t}$ we have that
\begin{align*}
\sup_{x \in X} \| (\eta^{N})_t^x -  (\eta^N)_{\tilde{t}}^x \|^*_{\TV} &= \sup_{x \in X} \sup_{f \in \cB_1(X)} \int_X f(y) \Big( W^N(t;x,y) - W^N(\tilde{t};x,y) \Big)~\rd\mu_X(y)\\
&\leq \sup_{i,j \in [N]} | W^N_{i,j}(t) -  W^N_{i,j}(\tilde{t})| \to 0,
\end{align*}
which implies that $\eta^N \in C(\cT_{t_0,T},\mathcal{B}(X,\cM(X)))$. Now let $k \in [N]$ and $x \in X^N_k$. Since $\mu_X(X^N_k)=\frac{1}{N}$, we have
\begin{align*}
\frac{\partial\phi^N(t,x)}{\partial t} = \dot{\phi}_k^N(t) &= \omega_k^N(\phi_k^N(t),t) +\frac{1}{N}
\sum_{j=1}^N D(\phi_k^N(t), \phi_j^N(t)) W^N_{kj}(t) \\
&= \omega(x,\phi^N(x,t), t) + \int_X D(\phi^N(t,x), \phi^N(t,y)) W^N(t;x,y) ~\rd\mu_X(y) \\
&= \omega(x,\phi^N(x,t), t) + \int_X D(\phi^N(t,x), \phi^N(t,y)) ~\rd(\eta^{N})_t^x(y).
\end{align*}
Further, similar to Example \ref{exmp: measure derivative} we calculate that
\begin{align*}
\frac{\partial  (\eta^{N})_t^x(y) }{\partial t} &=  \sum_{j=1}^N \dot{W}^N_{kj} (t) \chi_{X^N_j}(y) \mu_X(y) \\
&= - \varepsilon \sum_{j=1}^N  \Big(W^N_{kj}(t) \chi_{X^N_j}(y) \mu_X(y) + H(\phi_k^N(t), \phi_j^N(t))\chi_{X^N_j}(y) \mu_X(y) \Big) \\
&= - \varepsilon \Big( (\eta^N)_t^x(y) + H(\phi^N(t,x), \phi^N(t,y))\mu_X(y)\Big).
\end{align*}
Thus, the tuple $(\phi^N, \eta^N) \in C(\cT_{t_0,T}, L^\infty(X,\mathbb{T})) \times  C(\cT_{t_0,T}, \mathcal{B}(X,\cM(X)))$	solves the IVP \eqref{characteristic-Eq-1}.
\end{proof}

To show Theorem~\ref{thm: discr approx}, we need two more lemmata:

\begin{lem}
\label{lem: disc approx of the intial cond}
Under the notation and assumptions of Theorem \ref{thm: discr approx} we have: \\
\textbf{(i)}
\begin{equation*}
\lim_{N\to \infty} \| W^N(t_0; \cdot) - W \|_\infty = 0.
\end{equation*}
\textbf{(ii)}
\begin{equation*}
\lim_{N\to \infty} \| \eta^N_{t_0} - \eta_0 \|_{\sf TV}^* = 0.
\end{equation*}	
\end{lem}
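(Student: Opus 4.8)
The plan is to establish part (i) first — uniformly for both admissible choices of the discrete weights, \eqref{eq: weigths 1} and \eqref{eq: weights 2} — and then to deduce part (ii) from it almost immediately, using that $\mu_X$ is a probability measure. The only structural inputs are assumptions (A6) and (A7): by (A6), $W$ is continuous on the compact set $X\times X$ and hence uniformly continuous, so it has a modulus of continuity $\rho_W$ with $|W(x,y)-W(x',y')|\le\rho_W(d_X(x,x')+d_X(y,y'))$ and $\rho_W(s)\to 0$ as $s\to 0^{+}$; by (A7), $\delta_N:=\sup_{i\in[N]}diam(X_i^N)\to 0$ as $N\to\infty$.

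For (i), I fix $N$, indices $i,j\in[N]$, and a point $(x,y)\in X_i^N\times X_j^N$, so that $W^N(t_0;x,y)=W^N_{i,j}$, and I bound $|W^N_{i,j}-W(x,y)|$. If the weights are defined by \eqref{eq: weights 2}, then $W^N_{i,j}=W(x_0,y_0)$ for a fixed choice of $(x_0,y_0)\in X_i^N\times X_j^N$, so $d_X(x,x_0)+d_X(y,y_0)\le 2\delta_N$ and directly $|W^N_{i,j}-W(x,y)|\le\rho_W(2\delta_N)$. If the weights are defined by \eqref{eq: weigths 1}, then since $\mu_X(X_i^N)\mu_X(X_j^N)=N^{-2}$ the quantity $W^N_{i,j}$ is the $\mu_X\otimes\mu_X$-average of $W$ over $X_i^N\times X_j^N$; pulling the constant $W(x,y)$ inside the average and using the triangle inequality under the integral sign gives
\[
\big|W^N_{i,j}-W(x,y)\big|\le N^2\int_{X_i^N\times X_j^N}\big|W(x',y')-W(x,y)\big|\,\rd\mu_X(x')\,\rd\mu_X(y')\le\rho_W(2\delta_N).
\]
In either case the bound is uniform in $i,j$ and in $(x,y)\in X_i^N\times X_j^N$, hence $\|W^N(t_0;\cdot)-W\|_\infty\le\rho_W(2\delta_N)\to 0$ as $N\to\infty$, which is (i).

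For (ii), I use that by \eqref{eq: step graphon intro} and (A5)–(A6) we have $\rd(\eta^N)^x_{t_0}(y)=W^N(t_0;x,y)\,\rd\mu_X(y)$ and $\rd\eta^x_0(y)=W(x,y)\,\rd\mu_X(y)$ for every $x\in X$. Then for any $f\in\mathcal{B}_1(X)$,
\[
\int_X f(y)\,\rd\big((\eta^N)^x_{t_0}-\eta^x_0\big)(y)=\int_X f(y)\big(W^N(t_0;x,y)-W(x,y)\big)\,\rd\mu_X(y)\le\int_X\big|W^N(t_0;x,y)-W(x,y)\big|\,\rd\mu_X(y)\le\|W^N(t_0;\cdot)-W\|_\infty,
\]
since $\mu_X(X)=1$; taking the supremum over $f$ and then over $x$ yields $\|\eta^N_{t_0}-\eta_0\|^*_{\TV}\le\|W^N(t_0;\cdot)-W\|_\infty$, and the right-hand side tends to $0$ by (i).

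I do not expect a genuine obstacle here: the argument is a uniform-continuity-plus-mesh-refinement estimate. The only points that deserve care are (a) treating the two weight definitions on an equal footing — which works because the averaging definition in \eqref{eq: weigths 1} is controlled by the same modulus of continuity $\rho_W$ — and (b) the observation that the proof genuinely relies on compactness of $X$ (so that (A6) upgrades to uniform continuity) together with the mesh condition (A7); as the remark preceding the Cantor-set example indicates, dropping absolute continuity and continuity of the density $W$ would require a different, weaker approximation scheme.
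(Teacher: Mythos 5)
Your proof is correct and follows essentially the same route as the paper's: uniform continuity of $W$ on the compact set $X^2$ (A6) combined with the vanishing mesh size from (A7) gives part (i), and part (ii) follows by dominating the total variation norm by the $L^1(X;\mu_X)$-distance of the densities, which is at most $\|W^N(t_0;\cdot)-W\|_\infty$ since $\mu_X$ is a probability measure. The only difference is that you spell out explicitly that both weight definitions \eqref{eq: weigths 1} and \eqref{eq: weights 2} are controlled by the same modulus of continuity (the averaged version because an average of values within $\rho_W(2\delta_N)$ of $W(x,y)$ stays within $\rho_W(2\delta_N)$), a detail the paper leaves implicit in the phrase ``by the definition of $W^N$''.
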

\begin{proof}
\textbf{(i)} Let $\sigma>0$ be arbitrary. By (A6), since $X^2$ is compact, we have $W$ is uniformly continuous. Hence there exists a $\sigma_1>0$ such that
\begin{equation*}
| W(x,y)  - W(\hat{x}, \hat{y})| <\sigma,\quad \text{whenever}\quad |(x,y)-(\hat{x},\hat{y})|<\sigma_1.
\end{equation*}
Due to (A7) we can find an $N_0 \in \mathbb{N}, N_0= N_0(\sigma),$ such that
\begin{equation*}
\sup_{N \geq N_0}  \sup_{i \in [N]} diam(X^N_i) <\sigma_1.
\end{equation*}
By the definition of $W^N$,
\begin{equation*}
\|	W^N(t_0 ;\cdot) - W \|_\infty  <\sigma.
\end{equation*}
Since $\sigma$ was arbitrary, the conclusion follows.\\
\textbf{(ii)} It follows immediately from the definition of $\|\cdot \|_{\sf TV}^*$ and \textbf{(i)}, since for $N\to \infty$ we have
\begin{equation*}
\begin{split}
\|\eta^N_{t_0} - \eta_0 \|_{\TV}^* =& \sup_{x \in X} \sup_{f\in \mathcal{B}_1(X)}\int f(y) \Big( W^N(t_0;x,y)  - W(x,y) \Big)~\rd\mu_X(y) \\
\le& \sup_{x\in X}\| W^N(t_0;x,\cdot)  - W(x,\cdot) \|_{L^1(X;\mu_X)}\le\|W^N(t_0;\cdot)-W\|_{\infty} \to 0,
\end{split}
\end{equation*}
which finishes the proof.
\end{proof}

\begin{lem}
\label{lem: cont depend 2}
Under the notation and assumptions of Theorem \ref{thm: discr approx} we have:
\begin{equation*}%\label{eq: thm discret apprx 2}
d_{\cT_{t_0,T},\infty}\left((\phi,\eta),(\phi^N,\eta^N)\right) \le \Big( \| \phi_0 - \phi_0^N \|_\infty  +   \|\eta_0-\eta_{t_0}^N\|^*_{\TV}  +	T \|\omega - \omega^N \|_\infty \Big) \txte^{C_1T},
\end{equation*}
with the constant $C_1:= \Big(\Lip(\omega) + 2\Lip(D)\|\eta \|^{*}  + 2\varepsilon\Lip(H)  + \varepsilon + \| D \|_\infty\Big)$.
\end{lem}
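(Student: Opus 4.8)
The strategy is to combine the two continuous-dependence estimates of Proposition \ref{prop: reg} into a single Gronwall argument. The key observation is Proposition \ref{prop: ODE as integro-differ eq}(i): the lifted pair $(\phi^N,\eta^N)$ is itself a solution of the continuum limit equation \eqref{characteristic-Eq-1}, now with initial data $(\phi_0^N,\eta_{t_0}^N)$ and with the step-frequency $\omega^N$ from \eqref{eq: inti cond for the ODE cont lim} in place of $\omega$. Thus comparing $(\phi,\eta)$ and $(\phi^N,\eta^N)$ is precisely the scenario of Proposition \ref{prop: reg}, except that the initial condition and the frequency function are perturbed simultaneously; I would therefore re-run the estimates of that proposition carrying along all three perturbation terms.

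Concretely, first I would subtract the integral forms \eqref{characteristic-Eq-int-1} written for $(\phi,\eta)$ and for $(\phi^N,\eta^N)$ and bound $d_{\mathbb{T}}(\phi(t,x),\phi^N(t,x))\le|\phi(t,x)-\phi^N(t,x)|$ pointwise in $x$. The difference of the initial terms contributes $\|\phi_0-\phi_0^N\|_\infty$; inserting $\pm\,\omega(x,\phi^N(\tau,x),\tau)$ into the frequency integral splits it into a Lipschitz-in-phase part bounded by $\Lip(\omega)\int_{t_0}^t\|\phi(\tau,\cdot)-\phi^N(\tau,\cdot)\|_\infty\,\rd\tau$ and a part bounded by $T\,\|\omega-\omega^N\|_\infty$; and the coupling integral is split, exactly as in the proof of Proposition \ref{prop: reg}, into a $D$-Lipschitz contribution bounded by $2\Lip(D)\int_{t_0}^t\|\phi(\tau,\cdot)-\phi^N(\tau,\cdot)\|_\infty\,\|\eta_\tau\|^*\,\rd\tau$ and a measure-difference contribution bounded by $\|D\|_\infty\int_{t_0}^t\|\eta_\tau^x-(\eta^N)_\tau^x\|_{\TV}^*\,\rd\tau$. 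In parallel, using the weak form \eqref{eq: weak form of eq. for meas} for both families of measures (and that $\mu_X$ is a probability measure), I would estimate, exactly as in Proposition \ref{prop: reg}(i),
\begin{equation*}
\|\eta_t^x-(\eta^N)_t^x\|_{\TV}^*\le\|\eta_0-\eta_{t_0}^N\|_{\TV}^*+\varepsilon\int_{t_0}^t\|\eta_\tau^x-(\eta^N)_\tau^x\|_{\TV}^*\,\rd\tau+2\varepsilon\Lip(H)\int_{t_0}^t\|\phi(\tau,\cdot)-\phi^N(\tau,\cdot)\|_\infty\,\rd\tau.
\end{equation*}

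Adding these two estimates, recalling the definition \eqref{eq: metric tilded} of $d_\infty$ and collecting the integrands, one arrives at
\begin{equation*}
d_\infty\big((\phi(t),\eta_t),(\phi^N(t),\eta_t^N)\big)\le\Big(\|\phi_0-\phi_0^N\|_\infty+\|\eta_0-\eta_{t_0}^N\|_{\TV}^*+T\,\|\omega-\omega^N\|_\infty\Big)+C_1\int_{t_0}^t d_\infty\big((\phi(\tau),\eta_\tau),(\phi^N(\tau),\eta_\tau^N)\big)\,\rd\tau
\end{equation*}
with $C_1=\Lip(\omega)+2\Lip(D)\|\eta\|^*+2\varepsilon\Lip(H)+\varepsilon+\|D\|_\infty$; here $\|\eta\|^*<\infty$ is the norm of the true continuum solution, which is finite by Step~3 of the proof of Theorem \ref{thm A} (where in fact $\|\eta\|^*\le\|\eta_0\|^*+\|H\|_\infty$). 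Gronwall's inequality then yields $d_\infty\big((\phi(t),\eta_t),(\phi^N(t),\eta_t^N)\big)\le(\cdots)\,\txte^{C_1 t}\le(\cdots)\,\txte^{C_1 T}$ for every $t\in\cT_{t_0,T}$, and taking the supremum over $t\in\cT_{t_0,T}$ gives the claimed bound on $d_{\cT_{t_0,T},\infty}$. Equivalently, one may introduce the intermediate solution with data $(\phi_0^N,\eta_{t_0}^N)$ and frequency $\omega$, apply Proposition \ref{prop: reg}(i) and (ii) to the two halves, and use the triangle inequality; the combined estimate above just keeps the constant in the precise stated form.

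There is no serious obstacle here. The only point requiring a little care is that $(\phi^N,\eta^N)$ lies in $C(\cT_{t_0,T},L^\infty(X,\mathbb{T}))\times C(\cT_{t_0,T},\mathcal{B}(X,\cM(X)))$ rather than in the continuous class used in Theorem \ref{thm A}; but every step above is valid in this broader setting, since $\phi^N$ and $\eta^N$ are piecewise constant in $x$, the integral identities of Proposition \ref{prop: ODE as integro-differ eq} hold for $\mu_X$-a.e.\ $x$, and the norms $\|\cdot\|_\infty$ and $\|\cdot\|^*$ entering the estimates are suprema (essential suprema) over $x$ obeying the same inequalities as in the continuous case. The remaining work is the bookkeeping needed to check that collecting the above integrands produces a constant no larger than $C_1$, which — as in Proposition \ref{prop: reg} — is routine and leaves room to spare, since $C_1$ is not claimed to be sharp.
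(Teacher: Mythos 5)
Your proposal is correct and follows essentially the same route as the paper: it invokes Proposition \ref{prop: ODE as integro-differ eq} to view $(\phi^N,\eta^N)$ as a solution of \eqref{characteristic-Eq-1} with perturbed initial data and frequency, and then reruns the Gronwall estimates of Proposition \ref{prop: reg} (equivalently, combines parts (i) and (ii) via the triangle inequality). The paper's own proof is just a two-line citation of these same ingredients, so your more explicit write-out, including the remark about the lower regularity class of the lifted solution, is a faithful elaboration of the intended argument.
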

\begin{proof}
By Proposition \ref{prop: ODE as integro-differ eq} we know that both  $(\phi^N, \eta^N)$ and $(\phi,\eta)$ solve the integro-differential equation \eqref{characteristic-Eq-1}. Thus, the desired estimate is obtained with exact the same calculations as in  Proposition \ref{prop: reg} and the triangle inequality.
\end{proof}

\begin{proof}[Proof of Theorem \ref{thm: discr approx}]
From Lemma \ref{lem: cont depend 2} we know
\begin{equation}
\label{eq: thm discret apprx 1}
d_{\cT_{t_0,T},\infty}\left((\phi,\eta),(\phi^N,\eta^N)\right) \le \Big( \| \phi_0 - \phi_0^N \|_\infty  +   \|\eta_0-\eta_{t_0}^N\|^*_{\TV}  +	T \|\omega - \omega^N \|_\infty \Big) \txte^{C_1T}.
\end{equation}
By (A4), since $X$ is compact, we have $\phi_0(x)$ is uniformly continuous in $x$. Similar to the proof of Lemma \ref{lem: disc approx of the intial cond} \textbf{(i)}, we can show that
\begin{equation*}
\lim_{N\to \infty} \|\phi_0 - \phi_0^N \|_\infty =0.
\end{equation*}	
Moreover, due to the Lipschitz continuity assumption of $\omega$ and the definition of $\omega^N$ it is easy to check that
\begin{equation*}
\lim_{N\to \infty} \|\omega - \omega^N \|_\infty =0.
\end{equation*}
Combining these equations together with Lemma \ref{lem: disc approx of the intial cond} we obtain from \eqref{eq: thm discret apprx 1} the desired limit \eqref{eq: disc apr}.
\end{proof}

%%% checked up to here CK. continue below

\section{An adaptive Kuramoto model}
\label{sec: main application}

Let $X=[0,1]$ equipped with the standard Borel $\sigma$-algebra, the Lebesgue measure $\mu_X$ and the partition $X^n_i:= [\frac{i-1}{n},\frac{i}{n})$, $i \in [n], n \in \mathbb{N}$. Consider the co-evolutionary Kuramoto-type models on $\mathbb{T}^N \times \mathbb{R}^{N \times N}$  from \cite{BVSY21},
\begin{subequations}\label{eq: ex: coevolution}
\begin{alignat}{2}
\dot{\phi}^N_i=&\omega+\frac{1}{N}\sum_{j=1}^NW^N_{ij}(t)\sin(\phi_i^N - \phi_j^N+a) , \quad \phi^N(0) = \phi^{N,0} \in \mathbb{T}^N,\label{eq: ex: coevolution-a}\\
\dot{W}_{ij}^N=&-\varepsilon(W_{ij}^N+\sin(\phi_i^N - \phi_j^N + b))\quad i,j \in [N]\label{eq: ex: coevolution-b}, \quad W^N(0) = W^{N} \in \mathbb{R}^{N \times N},
\end{alignat} 	
\end{subequations}
where  $a,b, \omega\in \mathbb{R}$ are real numbers, $\varepsilon >0$ is a small parameter and the initial conditions will be specified later (cf. (H2)). Note that this model is a special case of the system \eqref{coevolution}, for the choice $D,H:\mathbb{T}^2\to \mathbb{R}$, $D(u,v):= \sin(u- v +a)$, $H(u,v) := \sin(u- v +b)$ and intrinsic frequency $\omega$ which is a constant function.

System \eqref{eq: ex: coevolution} is motivated from neuroscience to model synaptic connections between periodically spiking neurons. Here $\phi_i$ models the phase of the $i$-th neuron and the weights $W_{ij}$ model the synaptic connections within the network of oscillators, which adapts according to the phase change as time evolves. The adaptation rule \eqref{eq: ex: coevolution-b} for the weights allows us to model lots of different synaptic plasticity scenarios. For instance, $\beta=0$ yields a Hebbian learning/plasticity rule meaning that synapses between neurons, which spike in sync, get strengthened. For $\beta= - \frac{\pi}{2}$ we have so-called spike-timing-dependent plasticity (STDP) meaning the synapses between $i$-th and $j$-th neuron gets strengthened, provided that the $j$-th neuron spikes before the $i$-th neuron (see \cite{Berner2019} and references therein). The continuum counterpart of \eqref{eq: ex: coevolution} is following integro-differential equation:
\begin{subequations}\label{characteristic-Eq-6}
\begin{alignat}{2}
\frac{\partial\phi(t,x)}{\partial t}=&\omega +\int_0^1\sin(\phi(t,x)- \phi(t,y) +a)\rd\eta_{t}^x(y),\\
\frac{\partial\eta_t^x(y)}{\partial t}=&-\varepsilon\eta_{t}^x(y)
-\varepsilon \sin(\phi(t,x)- \phi(t,y) + b) \mu_X(y),  \label{characteristic-Eq-6b}\\
\phi(t_0,x)=&\phi_0(x),\quad \eta_{t_0}^x=\eta_0^x,
\end{alignat}	
\end{subequations}
with
the initial conditions $(\phi_0,\eta_0)\in C(X,\mathbb{T}\times\cM(X))$. We assume:
\begin{itemize}
\item[(H1)]The initial measure $\eta_0 \in \cM_{\abs}(X)$ with
\begin{equation*}
W(x,y)=\frac{\rd \eta_0^x(y)}{\rd\mu_X(y)}, \quad x \in X,
\end{equation*}
is continuous in $x$ and $y$ satisfying
\begin{equation*}
\inf_{x,y\in X}W(x,y):= c_W >0
\end{equation*}	
\end{itemize}
The initial phases and the discrete/finite-dimensional edge weights are then given by
\begin{align}
\label{eq: ex: weigths-a}
\phi^{N,0}_i &:= N \int_{\frac{i-1}{N}}^{\frac{i}{N}}\phi_0(y) ~\rd y ,\\\label{eq: ex: weigths-b}
W^{N}_{i,j} &:= N^2 \int_{\frac{i-1}{N}}^{\frac{i}{N}} \int_{\frac{j-1}{N}}^{\frac{j}{N}}W(x,y) ~\txtd \mu_X(x) ~\txtd \mu_X(y), \quad  i,j \in [N].
\end{align}	
\begin{thm}\label{theo: main appl} Assume (H1). Let $0<T < \frac{1}{\epsilon}\ln(1 +c_W)$. Then there exists a unique solution $(\phi(t), \eta_t) \in C(\mathcal{T}_{t_0,T}\times X,\mathbb{T}\times\cM_{+}(X))$ (in the sense of Definition \ref{def: sol of the IVP}) of the continuum limit equation \eqref{characteristic-Eq-6}. Moreover, let $\phi^{N,0}$ and $W^N$ be defined in \eqref{eq: ex: weigths-a} and \eqref{eq: ex: weigths-b}, respectively; let $\phi^N$ and $\eta^N$ be defined in  \eqref{eq: weights in intro 1} and \eqref{eq: step graphon intro}, respectively. Then
\begin{equation*}%\label{eq: disc apr 2}
\lim_{N \to \infty}d_{\cT_{t_0,T},\infty}\left((\phi,\eta),(\phi^N,\eta^N) \right) = 0.
\end{equation*}	
\end{thm}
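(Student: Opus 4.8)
The plan is to recognize \eqref{eq: ex: coevolution}--\eqref{characteristic-Eq-6} as an instance of the general model \eqref{coevolution}--\eqref{characteristic-Eq-1} and to verify the hypotheses (A1)--(A7); once this is done, existence, uniqueness and positivity follow from Theorem~\ref{thm A}, and the approximation statement follows from Theorem~\ref{thm: discr approx}. So the entire proof reduces to checking (A1)--(A7) for the choices $D(u,v)=\sin(u-v+a)$, $H(u,v)=\sin(u-v+b)$, $\omega\equiv$ const, $X=[0,1]$ with Lebesgue measure, and the partition $X^N_i=[\tfrac{i-1}{N},\tfrac iN)$.

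\textbf{Routine hypotheses.} For (A1)--(A2): since $\alpha\mapsto\sin\alpha$ is $1$-Lipschitz and $2\pi$-periodic, $|\sin\alpha-\sin\beta|\le d_{\mathbb{T}}(\alpha,\beta)$, and $d_{\mathbb{T}}(u-v,u'-v')\le d_{\mathbb{T}}(u,u')+d_{\mathbb{T}}(v,v')\le\sqrt2\,d_{\mathbb{T}^2}\big((u,v),(u',v')\big)$, so $D,H$ are Lipschitz with $\Lip(D),\Lip(H)\le\sqrt2$ and $\|H\|_\infty=1$. For (A3): $\omega$ is constant, hence Lipschitz with $\Lip(\omega)=0$ and trivially continuous in $t$. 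For (A4): $\phi_0\in C(X,\mathbb{T})$ is part of the standing assumption on the data, and $x\mapsto\eta_0^x$ is $\|\cdot\|_{\TV}^*$-continuous because $W$ is continuous on the compact square $X^2$, hence uniformly continuous, so $\|\eta_0^x-\eta_0^{x'}\|_{\TV}^*=\int_X|W(x,y)-W(x',y)|\,\rd\mu_X(y)\to0$ as $d_X(x,x')\to0$; together with $\eta_0\in\cM_{\abs}(X)$ from (H1) this also gives (A6). Finally (A7) is immediate for $X=[0,1]$ with Lebesgue measure and $X^N_i=[\tfrac{i-1}N,\tfrac iN)$, since $\mu_X(X^N_i)=\tfrac1N$, $\bigcup_i X^N_i=X$ and $diam(X^N_i)=\tfrac1N\to0$; moreover the discretized data \eqref{eq: ex: weigths-a}--\eqref{eq: ex: weigths-b} is exactly \eqref{eq: init cond} and \eqref{eq: weigths 1} for this partition, while $\omega$ being constant, its discretization \eqref{eq: omegas} is again the same constant.

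\textbf{The one nontrivial check.} The only place the bound on $T$ enters is (A5). By (H1), $\eta_0\in C(X,\cM_{+,\abs}(X))$ with $\inf_{x,y\in X}W(x,y)=c_W>0$; since $\|H\|_\infty=1$, the positivity requirement \eqref{condition-positivity} becomes $c_W\ge\txte^{\varepsilon T}-1$, i.e. $T\le\tfrac1\varepsilon\ln(1+c_W)$. Hence the hypothesis $0<T<\tfrac1\varepsilon\ln(1+c_W)$ implies (A5) (with strict slack), and all of (A1)--(A7) hold.

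\textbf{Conclusion and obstacle.} Applying Theorem~\ref{thm A} now yields a unique global solution $(\phi,\eta)\in C([t_0,+\I)\times X,\mathbb{T}\times\cM(X))$ with $(\phi(t),\eta_t)\in C(X,\mathbb{T}\times\cM_+(X))$ for $t\in\cT_{t_0,T}$; its restriction to $\cT_{t_0,T}$ is the asserted solution in $C(\cT_{t_0,T}\times X,\mathbb{T}\times\cM_+(X))$, unique in the sense of Definition~\ref{def: sol of the IVP} by the uniqueness part of Theorem~\ref{thm A}. The finite system \eqref{eq: ex: coevolution} has a unique global solution as a finite-dimensional ODE with smooth right-hand side, so $(\phi^N,\eta^N)$ from \eqref{eq: weights in intro 1}--\eqref{eq: step graphon intro} is well defined, and the convergence $d_{\cT_{t_0,T},\infty}\big((\phi,\eta),(\phi^N,\eta^N)\big)\to0$ is precisely the conclusion of Theorem~\ref{thm: discr approx} applied with the data \eqref{eq: ex: weigths-a}--\eqref{eq: ex: weigths-b}. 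There is no genuine analytical obstacle here: all the work has been done in Theorems~\ref{thm A} and~\ref{thm: discr approx}, and the proof is essentially the bookkeeping of matching the trigonometric model to the abstract hypotheses. The only mildly delicate points are keeping the torus geodesic metric straight when bounding $\Lip(D),\Lip(H)$, and observing that $\|H\|_\infty=1$, which is exactly what makes $T<\tfrac1\varepsilon\ln(1+c_W)$ the natural lifetime bound for positivity.
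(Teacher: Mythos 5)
Your proposal is correct and follows exactly the paper's route: the paper likewise proves this theorem by verifying (A1)--(A7) for the trigonometric data (with (A5) reducing, via $\|H\|_\infty=1$, to the stated bound on $T$) and then citing Theorem~\ref{thm A} and Theorem~\ref{thm: discr approx}. Your write-up merely spells out the Lipschitz and positivity checks that the paper declares ``obvious,'' which is fine.
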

\begin{proof}
We first show the well-posedness. It is obvious that (A1)-(A3) and (A7) are fulfilled. Moreover, (A4) and (A6) both follow from (H1). (A5) follows from $0<T < \frac{1}{\epsilon}\ln(1 +c_W)$. Hence the conclusions follow immediately from Theorem \ref{thm A} and Theorem \ref{thm: discr approx}.
\end{proof}

\section{Discussion and outlook}
\label{sec: lim of the framework}

In this paper we use measure-valued (continuous) functions, so-called digraph measures introduced to network dynamics on graphs first in \cite{KuehnXu}, to represent the underlying graph with vertices in the compact space $X$. We have studied continuum limits for fully adaptive networks, which have seen recently gained a lot of interest in the natural sciences. We have established a result on well-posedness of a limiting integro-differential equation under very weak conditions and established a discrete-to-continuum approximation under additional hypotheses. The assumptions (A1)-(A4), which we used for well-posedness place very few restrictions on the underlying graph structures. Yet, assumption (A5) required here for the continuum limit approximation properties is stronger. Indeed, (A5) implies the underlying graph measure is absolutely continuous with a density $W$. Such $W$ is precisely a graphon \cite{L12}, a limit of a sequence of dense graphs. Moreover, convergence of absolutely continuous measures in total variation distance is equivalent to that of their densities in the $L_1$-norm induced metric. Hence (A5)-(A6) just means we can consider graphons with a uniform lower bound (for positivity) and such graphons are continuous. Indeed, except the technical positivity condition \eqref{condition-positivity} in (A5), graphons satisfying (A5)-(A6) form a dense set of all graphons on $X^2$, since the set of continuous functions are dense in $L^1(X^2;\mu_X\otimes\mu_X)$. This observation also means it is likely that one can further generalize our results to the entire space of positive $L^1$-graphons (satisfying the positivity condition), using additional approximation arguments. We do not believe this extension to require additional significant creativity, just the notation will become extremely involved and may obscure the main ideas, which we have laid out in this paper, to derive continuum limits for adaptive network dynamics. A similar remark also applies to typical slight modifications and extensions, which are possible with our methods. These extensions include (E1) closely related models on time-and state-dependent networks, e.g., the Cucker-Smale model with adaptive couplings or state dependent sensing \cite{HDP2020,Drfler2012}, (E2) existence in backward time of the continuum limit, (E3) lowering Lipschitz regularity to prove Peano-type existence theorems, and (E4) extending results to locally Lipschitz weight dynamics. From a viewpoint of most applications, which motivated the study of adaptive network dynamics in the first place, these extensions to our results are somewhat of secondary importance, e.g., second-order ODEs can be re-written as first-order systems, existence in backward time will follow upon reversing suitable integrals, Peano-type theorems follow from adaptations of classical ODE methods, and local Lipschitz conditions can be implemented using cut-offs.  

Yet, there is a key open problem, which is highly challenging mathematically and of major importance from an applied perspective: How far can we push discrete approximation results of the continuum limit to different lower-density graph limit objects, i.e., to intermediate density or sparse graphs? Well-posedness works already, and we have set up our framework via measure-theoretic arguments to allow for generalizations. For example, in this paper we used the total variation, which induces a strong topology in the space of finite signed measures. Next, it seems natural to ask if analogous or even better conclusions also hold in the weak topology in the space of finite positive measures induced by the bounded Lipschitz metric. Yet, the immense analytical difficulty level of trying to find a sharp boundary in the space of graph limits is apparent. For example, sparse graphs (such as the circular graphings discussed in \cite[Example 5.5]{GK20}; see also examples in \cite{KuehnXu}) or graphs of intermediate density (e.g., the spherical graphop discussed in \cite[Example 5.4]{GK20}; see also examples in \cite{KuehnXu}), though continuous in $x$ in bounded Lipschitz metric, may not be continuous in the total variation norm. Hence, it is likely that an extremely delicate choice of metric(s) is required to tackle various classes of sparse graphs. We are currently pursuing this crucial question regarding the interaction of density scales of graph limits with continuum, as well as mean-field, limits. However, we anticipate that this problem will require a very long-term effort including partial steps lowering density requirements gradually.

\medskip

\textbf{Acknowledgements:} MAG and CK gratefully thank the TUM International Graduate School of Science and Engineering (IGSSE) for support via the project ``Synchronization in Co-Evolutionary Network Dynamics (SEND)''. CK also acknowledges partial support by a Lichtenberg Professorship funded by the VolkswagenStiftung. CX acknowledges TUM Foundation Fellowship as well as the Alexander von Humboldt Fellowship.

\bibliographystyle{plain}
\bibliography{paper_submission}

\end{document}